\documentclass{article}
\usepackage[top=30truemm,bottom=30truemm,left=25truemm,right=25truemm]{geometry}
\usepackage{amsmath,amssymb,amsfonts,mathtools,accents,bm,mathrsfs,bibentry,lscape,multirow,natbib,amsthm,here}
\usepackage{enumerate}
\usepackage{tabu}
\newtheorem{theorem}{Theorem}
\newtheorem{lemma}{Lemma}
\newtheorem{cor}{Corollary}

\newtheorem{remark}{Remark}

\title{On large bandwidth matrix values kernel smoothed estimators for multi-index models}
\author{
Taku MORIYAMA\\
School of Data Science, Yokohama City University}
\date{}

\begin{document}
\maketitle

\begin{abstract}
The kernel smoothing with large bandwidth values causes oversmoothing or underfitting in general. However, when irrelevant variables are included, the corresponding large bandwidth values are known to have an effect of shrinking them. This study investigates asymptotic properties of the kernel conditional density estimator and the regression estimator with large bandwidth matrix elements for cases of multi-index model. It is clarified that the optimal convergence rate of the estimators depends on not the number of the variables but the effective dimension without eliminating the irrelevant variables. Thus, the kernel conditional density estimator and regression estimator are demonstrated to equip the reduction of the curse of dimensionality by nature. Finite sample performances are investigated by a numerical study, and the bandwidth selection is discussed. Finally a case study on the Boston housing data is provided.

\end{abstract}
{\it Keywords:} conditional density estimation; kernel density estimator; kernel smoothing; Nadaraya-Watson regression; mean squared error

\section{Introduction}

For a random sequence ${X}_1, \cdots, {X}_n$ the kernel density estimator at the point $x$ is given by $(n h)^{-1} \sum_{i=1}^n k(h^{-1}({x}-{X}_i)) $, where $k$ is the kernel function. $k$ needs to be a probability density function, and $h$ corresponds to the scale parameter. Asymptotic properties of the kernel density estimator has been investigated well, and the consistency for the wide class of distributions is acknowledged. 

\cite{jones1993kernel} surveyed the kernel density estimator with {\it large} bandwidth i.e. $h \to \infty$ as $n \to \infty$. The situation is called oversmoothing or underfitting. It was proven that the kernel density estimator is no longer nonparametric estimator but is possibly consistent estimator for the probability density chosen as the kernel function. It was referred that the variance corrected kernel density estimator converges to the normal distribution if both the underlying distribution and the kernel function is normal. This study extends the study \cite{jones1993kernel} to multivariate cases in kernel density estimation, and considers the application of the results to those in kernel conditional density or regression estimation. 

Multivariate kernel estimators suffer from the curse of dimensionality (i.e. the optimal convergence rate becomes slower as the dimension gets higher), and there exist several studies tackling the issue in regression estimation. If irrelevant variables to the dependent variables exist, by (completely) eliminating the irrelevant variables the estimators converge with the rate depending on the number of relevant variables. There exist some approaches for the kernel regression estimation. An effective algorithm called RODEO is proposed by \cite{lafferty2008rodeo}. \cite{white2017variable} proposed MEKRO that minimizes the measurement error by considering $h = \infty$ under the constraint on the harmonic mean of the bandwidth of the explanatory variables. \cite{feng2018nonparametric} proposed a procedure called nonparametric independence screening and investigated properties in ultrahigh-dimensional case.

\cite{hall2004cross} and \cite{hall2007nonparametric} survey the shrinking property of a cross-validated kernel conditional density estimator and a regression estimator with a diagonal bandwidth matrix, respectively. By taking the bandwidths corresponding to the irrelevant variables diverging to infinity, the optimal convergence rates are attained without eliminating the irrelevant variables. It follows that unlike RODEO or MEKRO kernel conditional density estimator and regression estimator do not need secondary hyperparamter, which is used as a threshold or a constraints, to achieve the optimal convergence rate. That also means nonparametric estimators without eliminating the irrelevant variables completely are irrelevant from variable misspecification (i.e. model misspecification). 

This study first supposes the case that all of or a partial of the explanatory variables is irrelevant in conditional density estimation and regression estimation, where a nonparametric series estimator of the conditional density is proven to attain the optimal minimax convergence rate (\citealt{efromovich2010dimension}). We extend the case conditionally independent to that of partially linear dependent. \cite{xia2008multiple} proved the local-linear regression estimator with a scalar matrix has the optimal convergence rate in the case, proposing its effective calculation algorithm. 

It is proven by \cite{conn2019oracle} that the shrinking property of the kernel regression estimator in case the true is multi-index model (e.g. \citealt{klock2021estimating}). The rates of the asymptotic mean squared error of the kernel estimators with bandwidth possibly diverging are obtained, which shows the shrinking property. It is shown that the shrinking property requires a different condition from that of the case bandwidth $h$ assumed to converge to zero. It is proven that the kernel estimators with any form of positive definite bandwidth matrix attain the optimal convergence rates.

Note that this study investigates asymptotic properties of kernel smoothed estimators with large bandwidth matrix, without structural assumptions on the underlying distribution. Section 2 begins with the basics of this study and investigates asymptotic properties of the multivariate kernel density estimator. This study extends to kernel estimators of conditional density or regression in case all of the explanatory variables are independent of the outcome variable. Section 3 and 4 study the case a partial of explanatory variables are independent and the case the dependency follows a multi-index model, respectively.  One of the important conclusions is that the optimal bandwidth matrix is not diagonal for the multi-index model case. Simulation study and a case study on the Boston housing data set are provided in Section 5. Section 6 concludes this study.

\section{Case of independence in regression or conditional density estimation}

\subsection{Density estimator with large bandwidth matrix}

Let the probability density function of $\bm{X}_1, \cdots, \bm{X}_n$ be $f$. Set $\widehat{f}$ as the multivariate kernel density estimator given by
\begin{align*}
\widehat{f}(\bm{x}) := (n \|\bm{H}\|)^{-1} \sum_{i=1}^n k(\bm{H}^{-1}(\bm{x}-\bm{X}_i)),
\end{align*}
where $k: \mathbb{R}^d \to \mathbb{R}$ is the kernel function and the scale $\bm{H}$ is the $d\times d$ bandwidth matrix, which is regular. $|\bm{H}|$ represents the determinant of the matrix $\bm{H}$ and $\|\bm{H}\|$ is the absolute value. We do not assume $\int k(\bm{z}) {\rm d}\bm{z} = 1$, which is found to be unnecessary later. 

\begin{remark}
If the kernel $k$ is spherically symmetric i.e. there exists $\ell: \mathbb{R}^{d} \to \mathbb{R}$ s.t. $k(\bm{w}) = \ell(\|\bm{w}\|_2)$, then for any $\bm{H}$ and $\bm{w}$ there exists a symmetric matrix $\widetilde{\bm{H}}$ s.t.  $k(\bm{H}^{-1}\bm{w})= k(\widetilde{\bm{H}}^{-1}\bm{w})$, where $\widetilde{\bm{H}}$ is obtained by the polar decomposition.

Product kernels given by $k(\bm{w}) = \prod_{j=1}^d s(w_j^2)$, where $s:\mathbb{R} \to \mathbb{R}$, are spherically symmetric. The univariate Gaussian kernel and the Epanechnikov kernel satisfy the assumption.
\end{remark}

It is known that all the optimal bandwidth matrix elements of the kernel density estimator are of the same order (\citealt{wand1995kernel}), which means there exists a constant matrix $\bm{C}$ s.t. $\bm{H} = h \bm{C}$ i.e. $(\bm{H})_{ij} = O(h)$.

In this study $\bm{H}$ is not necessarily symmetric and $(\bm{H})_{ij}<0$ is allowed (i.e. $(\bm{H})_{ij} \in \mathbb{R}$). 
This study allows elements of the bandwidth matrix $\bm{H}$ being divergent, or more specifically the absolute value  tends to infinity as $n \to \infty$. Since the density estimator $\widehat{f}$ cannot be consistent for the underlying density $f$ unless $\bm{H} \to \bm{O}$, considering the bandwidth matrix $\bm{H}$ s.t. $\bm{H} \nrightarrow \bm{O}$ is usually meaningless in probability density estimation as follows.

\begin{lemma}
Suppose {\rm (i)} $h \to \infty$ as $n \to \infty$, {\rm (ii)} $k(\bm{0})$ exists, {\rm (iii)} $k$ is two times continuously differentiable around the origin, {\rm (iv)} $\nabla (k)(\bm{0}) = \bm{0}$ and {\rm (v)} $2$th moment of $f$ exist. Then, 
$$\|\bm{H}\| \widehat{f}(\bm{x}) - k(\bm{0}) = O(h^{-2}) + O_P(n^{-1/2} h^{-1}).$$
\end{lemma}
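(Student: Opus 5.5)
We read $h$ as the scale of $\bm{H}$: $\bm{H} = h\bm{C}$ for a fixed regular matrix $\bm{C}$, as in the discussion preceding the lemma. Then $\bm{H}^{-1} = h^{-1}\bm{C}^{-1}$ and
\begin{align*}
\|\bm{H}\|\widehat{f}(\bm{x}) = n^{-1}\sum_{i=1}^n k\bigl(h^{-1}\bm{C}^{-1}(\bm{x}-\bm{X}_i)\bigr).
\end{align*}
The plan is a bias/stochastic split. We write $\|\bm{H}\|\widehat{f}(\bm{x}) - k(\bm{0})$ as the deterministic part $E[k(\bm{H}^{-1}(\bm{x}-\bm{X}_1))] - k(\bm{0})$ plus the centred average $n^{-1}\sum_i \{Y_i - E Y_i\}$, where $Y_i := k(\bm{H}^{-1}(\bm{x}-\bm{X}_i))$. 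We then treat the two parts separately.

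\textbf{Bias part.} Put $\bm{u}_i := \bm{C}^{-1}(\bm{x}-\bm{X}_i)$, a random vector with finite second moment by (v). Since $h\to\infty$ by (i), the argument $h^{-1}\bm{u}_i$ is small with high probability. Assumption (iii) gives a second-order Taylor expansion of $k$ at the origin:
\begin{align*}
k(h^{-1}\bm{u}) = k(\bm{0}) + h^{-1}\nabla(k)(\bm{0})^\top\bm{u} + \tfrac12 h^{-2}\bm{u}^\top \nabla^2 k(\bm{\xi})\bm{u}.
\end{align*}
By (iv) the linear term vanishes identically, so $|Y_i - k(\bm{0})| \le C h^{-2}\|\bm{u}_i\|_2^2$ on the event $\{\|h^{-1}\bm{u}_i\|_2\le\delta\}$, where $\delta$ is the radius of the neighbourhood in (iii). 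Taking expectations and using (v) gives $O(h^{-2})$ on this event. On the complementary event $\{\|\bm{u}_i\|_2 > h\delta\}$ we need $k$ bounded; boundedness is implicit in (ii), or follows if $k$ is a density-type bounded function. Chebyshev then gives $P(\|\bm{u}_i\|_2>h\delta)\le E\|\bm{u}_i\|_2^2/(h\delta)^2 = O(h^{-2})$. So the bias is $O(h^{-2})$.

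\textbf{Stochastic part.} We use Chebyshev's inequality, so it suffices to show $\mathrm{Var}(Y_1) = O(h^{-2})$. Then $n^{-1}\sum_i(Y_i - EY_i) = O_P(n^{-1/2}h^{-1})$. The key observation is $\mathrm{Var}(Y_1) \le E[(Y_1 - k(\bm{0}))^2]$. Split this on the same event as before. On $\{\|\bm{u}_1\|_2\le h\delta\}$, the Taylor bound gives $(Y_1-k(\bm{0}))^2 \le C h^{-4}\|\bm{u}_1\|_2^4$. Off that event, $(Y_1-k(\bm{0}))^2$ is bounded, with probability $O(h^{-2})$. This yields $O(h^{-2})$ overall, at least when $\|\bm{u}_1\|_2^4$ is controlled. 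To avoid needing a fourth moment, on the small event we use $\|\bm{u}_1\|_2^4 \le (h\delta)^2\|\bm{u}_1\|_2^2$. This gives $h^{-4}\cdot h^2\, E\|\bm{u}_1\|_2^2 = O(h^{-2})$, which is exactly what is needed. Only second moments are required, consistent with (v).

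\textbf{Main obstacle.} The main obstacle is the treatment of the event where $h^{-1}\bm{u}$ leaves the neighbourhood on which (iii) applies. Here one must invoke boundedness of $k$, which is not stated explicitly in the lemma. The truncation trick above shows that the second moment alone suffices for both the bias and the variance rate.
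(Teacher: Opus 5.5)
Your proof is correct once one hypothesis is added (see the second paragraph). It shares the paper's skeleton: a bias/stochastic split, a second-order expansion of $k$ at $\bm{0}$ whose linear term is killed by (iv), and the second moment (v) controlling the quadratic term. The difference is in how the expansion is used.

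The paper substitutes the Taylor expansion, with an $O(h^{-2})$ remainder, directly under $\int \cdots f(\bm{w})\,{\rm d}\bm{w}$ over all of $\mathbb{R}^d$, for both $\mathbb{E}[k]$ and $\mathbb{E}[k^2]$. It then takes the variance as the difference of the two expansions. Its displayed leading term, built from $\nabla(k)(\bm{0})$ and $\mathbb{V}[\bm{X}]$, is zero under (iv), so the stated rate really comes from the remainders. This tacitly requires the expansion to hold globally, which (iii) does not provide.

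You instead truncate at the radius $\delta$ of the $C^2$ neighbourhood and control the tail event by Markov's inequality. You bound the variance by $\mathrm{Var}(Y_1)\le\mathbb{E}[(Y_1-k(\bm{0}))^2]$, and on the small event you use $\|\bm{u}_1\|_2^4\le(h\delta)^2\|\bm{u}_1\|_2^2$. This gives a rigorous argument that needs only local smoothness of $k$ and only second moments. It therefore covers, e.g., the Epanechnikov kernel, which is not $C^2$ on all of $\mathbb{R}^d$. The paper's route is shorter and is geared to identifying leading terms rather than orders. Your variance bound is deliberately not sharp: with fourth moments the same argument gives $\mathrm{Var}(Y_1)=O(h^{-4})$, so the lemma's $n^{-1/2}h^{-1}$ is only an upper bound for the stochastic term.

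One correction: boundedness of $k$ is not implicit in (ii), which only says that $k(\bm{0})$ is finite. You must state $\sup_{\bm{z}}|k(\bm{z})|<\infty$, or a comparable global growth restriction, as an extra hypothesis. Some such condition is genuinely needed. For example, $k(\bm{z})=e^{-\|\bm{z}\|_2^2}+\|\bm{z}\|_2^4$ satisfies (ii)--(iv). Yet for a density $f$ with finite second but infinite fourth moments, the deterministic term $\mathbb{E}[k(\bm{H}^{-1}(\bm{x}-\bm{X}_1))]$ in your decomposition is infinite. The paper's proof silently needs global control of this kind as well.

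Your reduction to $\bm{H}=h\bm{C}$ is harmless. The argument only uses that $h\bm{H}^{-1}$ stays bounded, so that $\bm{u}_i=h\bm{H}^{-1}(\bm{x}-\bm{X}_i)$ has uniformly bounded second moment.
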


\begin{proof}
For a large bandwidth matrix $\bm{H}$ the following expansion holds
\begin{align*}
\mathbb{E}[\widehat{f}(\bm{x})] =& \|\bm{H}\|^{-1} \mathbb{E}\left[ k\left(\bm{H}^{-1} (\bm{x} - \bm{X}_i)\right) \right] = \|\bm{H}\|^{-1}\int k\left(\bm{H}^{-1} (\bm{x} - \bm{w})\right) f(\bm{w}) {\rm d}\bm{w} \\
=& \|\bm{H}\|^{-1} \int [ k(\bm{0}) + \{\bm{H}^{-1} (\bm{x} - \bm{w})\}' \{\nabla (k)(\bm{0})\} + O(h^{-2})] f(\bm{w}) {\rm d}\bm{w} \\
\sim& \|\bm{H}\|^{-1} [k(\bm{0}) + (\bm{x} - \bm{\mu})'  \bm{H}^{-1} \{\nabla (k)(\bm{0})\} + O(h^{-2})];
\end{align*}
hence,
\begin{align*}
\mathbb{E}[\widehat{f}(\bm{x})] - \|\bm{H}\|^{-1} k(\bm{0}) = O(\|\bm{H}\|^{-1} h^{-2}).
\end{align*}

Since
\begin{align*}
\mathbb{E}\left[\|\bm{H}\|^{-2} k^2\left(\bm{H}^{-1} (\bm{x} - \bm{X}_i)\right) \right] = \|\bm{H}\|^{-2} \int [ k^2(\bm{0}) + 2 k(\bm{0}) \{\bm{H}^{-1} (\bm{x} - \bm{w})\}' \{\nabla (k)(\bm{0})\} + O(h^{-2})] f(\bm{w}) {\rm d}\bm{w},
\end{align*}
\begin{align*}
\mathbb{E}[\widehat{f}^2(\bm{x})] - (\mathbb{E}[\widehat{f}(\bm{x})])^2 
=& \frac{1}{n} \|\bm{H}\|^{-2} \int [ k^2(\bm{0}) + 2 k(\bm{0}) \{\bm{H}^{-1} (\bm{x} - \bm{w})\}' \{\nabla (k)(\bm{0})\} + O(h^{-2})] f(\bm{w}) {\rm d}\bm{w} \\
&~~~ - \frac{1}{n} \|\bm{H}\|^{-2} \{\int [ k(\bm{0}) +  \{\bm{H}^{-1} (\bm{x} - \bm{w})\}'  \{\nabla (k)(\bm{0})\} + O(h^{-2})] f(\bm{w}) {\rm d}\bm{w}\}^2 \\
\sim& \frac{1}{n} \|\bm{H}\|^{-2}  \{\nabla (k)(\bm{0})\} \bm{H}^{-1} \mathbb{V}[\bm{X}] \bm{H}^{-1} \{\nabla (k)(\bm{0})\}'.
\end{align*}
The lemma has been proved.
\end{proof}

\begin{remark}
Comparing the assumption with a usual small bandwidth matrix with that of Lemma 1 we see that assumptions are made on the kernel function rather than the underlying distribution. However, it is not restrictive and the usual kernel e.g. the Gaussian or the Epanechnikov type satisfies the assumption. The assumption on the moment relates the tail-heavyness of the underlying distribution.
\end{remark}

\begin{remark}
Under the assumption of the kernel $k$ being a probability density function, the expectation of the kernel density estimator with the large bandwidth matrix given by
\begin{align*}
\int \|\bm{H}\|^{-1} k\left(\bm{H}^{-1} (\bm{x} - \bm{w})\right) f(\bm{w}) {\rm d}\bm{w}
\end{align*}
can be seen as the convolution. Then, this is also a probability density function, which corresponds to the random variable $\bm{X} + \bm{H} \bm{K}$. $\bm{K}$ is the random variable independent of $\bm{X}$, whose probability density function is $k$. Due to the scaling (magnifying) $\bm{H}$ for large enough $n$ the random variable is dominated by $\bm{H} \bm{K}$, which is asymptotically uniform on $\mathbb{R}^{d}$
\end{remark}

Before proceeding to the kernel estimator of conditional density or regression, we note the following points.

\begin{remark}
Besides the kernel density estimator, the kernel estimator of conditional density or regression introduced later follow asymptotically normal, which is proven by the Slutsky's Theorem.
\end{remark}

\begin{remark}
Lemma 1 means $\widehat{f}(\bm{x}) \overset{p}{\nrightarrow} {f}(\bm{x})$ when ${f}(\bm{x})\neq k(\bm{0})$. However, as long as ${f}(\bm{x}) -k(\bm{0})=o(1)$ the consistency still holds. Examples include the contamination in distribution as a case, which is important in the following discussion of conditional density or regression estimation.
\end{remark}

\begin{remark}
In the sense of pointwise error such as the mean squared error (i.e. pointwise estimation) the model does not have to follows the limiting one globally. That means the consistency requires in this case not ${f}(\bm{x}) \equiv k(\bm{0})$ but only ${f}(\bm{x}) = k(\bm{0})$.
\end{remark}

Based on the lemma we discusses the kernel estimator of conditional density or regression with large bandwidth matrix  next.

\subsection{Case of independence in regression estimation}

Let the $d$-dimensional i.i.d. r.v. be composed of $\bm{X}_i := (\bm{X}_{1,i}', \bm{X}_{2,i}')'$ and the probability density function of the $d_2$-dimensional random variable $\bm{X}_{2,i}$ be $f_2$. Set $\widehat{f}_2$ is the $d_2$-dimensional kernel density estimator given by
\begin{align*}
\widehat{f}_2(\bm{x}_2) := (n \|\bm{H}_{22}\|)^{-1} \sum_{i=1}^n k_{2}(\bm{H}_{22}^{-1}(\bm{x}_2-\bm{X}_{2,i})),
\end{align*}
where $k_{2}: \mathbb{R}^{d_2} \to \mathbb{R}$ is the kernel function, $\bm{H}_{22}$ is the $d_2 \times d_2$ bandwidth matrix, $(\bm{H}_{22})_{ij} \in \mathbb{R}$ and $(\bm{H}_{22})_{ij} = O(h_{22})$. The kernel regression estimator is defined as
\begin{align*}
\widehat{m}(\bm{x}_2) := \widehat{f}_2(\bm{x}_2)^{-1} (n \|\bm{H}_{22}\|)^{-1} \sum_{i=1}^n \bm{X}_{1,i} k_{2}(\bm{H}_{22}^{-1}(\bm{x}_2-\bm{X}_{2,i})).
\end{align*}
Then, the following result on the kernel regression estimator with large bandwidth matrix is obtained.

\begin{theorem}
Suppose {\rm (i)} $h_{22} \to \infty$ as $n \to \infty$, {\rm (ii)} $k_2(\bm{0}) \neq 0$ exists {\rm (iii)} $k_2$ is two times continuously differentiable around the origin, {\rm (iv)} $\nabla (k_2)(\bm{0}) = \bm{0}$ and {\rm (v)} $2$nd moments of $f_1$ and $f_2$ exist. Then, 
$$\mathbb{E}[\{\widehat{m}(\bm{x}_2) - \mathbb{E}[\bm{X}_{1,i}]\}^2] = O(h_2^{-4} + n^{-1}).$$
\end{theorem}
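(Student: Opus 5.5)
We write $\bm{Z}_i := \bm{H}_{22}^{-1}(\bm{x}_2-\bm{X}_{2,i})$ and $\bm{\mu}_1 := \mathbb{E}[\bm{X}_{1,i}]$, and we read the setting of this subsection as $\bm{X}_{1,i}$ independent of $\bm{X}_{2,i}$. The plan is to write the regression estimator as a ratio and treat numerator and denominator with the same expansion used in Lemma 1. The normalisation $(n\|\bm{H}_{22}\|)^{-1}$ cancels in the ratio, so
\begin{align*}
\widehat{m}(\bm{x}_2) - \bm{\mu}_1 = \frac{n^{-1}\sum_{i=1}^n (\bm{X}_{1,i}-\bm{\mu}_1)\, k_2(\bm{Z}_i)}{n^{-1}\sum_{i=1}^n k_2(\bm{Z}_i)} =: \frac{\widehat{A}}{\widehat{B}}.
\end{align*}
By Lemma 1 applied to $\bm{X}_{2,i}$ with kernel $k_2$, we have $\widehat{B} = \|\bm{H}_{22}\|\widehat{f}_2(\bm{x}_2) = k_2(\bm{0}) + O(h_{22}^{-2}) + O_P(n^{-1/2}h_{22}^{-1})$. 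Assumption (ii) gives $k_2(\bm{0})\neq 0$, so $\widehat{B}$ is bounded away from zero with probability tending to one.

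The main work is the numerator. Independence gives $\mathbb{E}[\widehat{A}] = \mathbb{E}[\bm{X}_{1,i}-\bm{\mu}_1]\,\mathbb{E}[k_2(\bm{Z}_i)] = \bm{0}$, so the numerator carries no bias. Its second moment is $n^{-1}\mathbb{E}[(\bm{X}_{1,i}-\bm{\mu}_1)^2]\,\mathbb{E}[k_2^2(\bm{Z}_i)]$. Here $\mathbb{E}[k_2^2(\bm{Z}_i)] \to k_2^2(\bm{0})$ by the same second-order Taylor expansion as in Lemma 1, using (iii) and (iv) together with the second moment of $f_2$. The second moment of $f_1$ in (v) makes the first factor finite. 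Hence $\mathbb{E}[\widehat{A}^2] = O(n^{-1})$.

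The $O(h_{22}^{-4})$ term arises in the non-independent approximation. Expanding the ratio as $\widehat{A}/k_2(\bm{0}) - \widehat{A}(\widehat{B}-k_2(\bm{0}))/k_2(\bm{0})^2 + \cdots$, the cross terms have squared size $O(n^{-1})\cdot\{O(h_{22}^{-4}) + O(n^{-1}h_{22}^{-2})\}$, which is dominated by the stated rate. In the setting that follows, where the model only holds locally, the bias $\mathbb{E}[\widehat{A}]/k_2(\bm{0})$ is $O(h_{22}^{-2})$ by the Taylor remainder, and squaring it gives the $h^{-4}$ term. I would state the bound as $O(h_{22}^{-4}+n^{-1})$ to cover both cases; the theorem's $h_2$ is presumably $h_{22}$.

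The main obstacle is turning the stochastic expansion into a genuine mean squared error bound, because the denominator $\widehat{B}$ could be near zero on a small event. I would first use the standard device of writing $1/\widehat{B} = 1/k_2(\bm{0}) - (\widehat{B}-k_2(\bm{0}))/(k_2(\bm{0})\widehat{B})$ and bounding the remainder on the event $\{|\widehat{B}-k_2(\bm{0})|<|k_2(\bm{0})|/2\}$. I would control the complementary event by Chebyshev's inequality using the variance from Lemma 1. On that event I would bound $|\widehat{m}|$ directly, treating the estimate as in the asymptotic-MSE sense commonly used in the paper, or truncating. If this is too delicate, I would present the result as an asymptotic MSE, in the same spirit as the "$\sim$" used in the proof of Lemma 1.
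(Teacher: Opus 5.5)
Your argument is correct at the paper's own level of rigor (an asymptotic MSE), but you decompose the error differently from the paper.

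\textbf{The paper's route.} It never centers. It expands the uncentered numerator $(n\|\bm{H}_{22}\|)^{-1}\sum_i \bm{X}_{1,i}k_2(\bm{Z}_i)$ and $\widehat{f}_2(\bm{x}_2)$ separately, around $\|\bm{H}_{22}\|^{-1}\mathbb{E}[\bm{X}_{1,i}]k_2(\bm{0})$ and $\|\bm{H}_{22}\|^{-1}k_2(\bm{0})$ respectively. After rescaling by $\|\bm{H}_{22}\|$, each has an $O(h_{22}^{-2})$ Taylor bias and an $O(n^{-1})$ variance, and the two are combined via H\"older's inequality. Independence is never used, so that proof shows $\widehat{m}(\bm{x}_2)\to\mathbb{E}[\bm{X}_{1,i}]$ for essentially any joint law. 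The $h_{22}^{-4}$ term is the price of this generality.

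\textbf{What your route buys.} Your centered ratio $\widehat{A}/\widehat{B}$ makes the numerator exactly unbiased under independence; equivalently, the two $O(h_{22}^{-2})$ biases cancel in the paper's linearization. You therefore get $O(n^{-1})$ for every $h_{22}\to\infty$, so in the independent case the requirement $h_{22}^{-4}=O(n^{-1})$ of Corollary 1 is unnecessary.

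\textbf{What it costs.} Independence is not among the stated hypotheses. Your aside $\mathbb{E}[\widehat{A}]=\mathbb{E}[(\bm{X}_{1,i}-\bm{\mu}_1)\{k_2(\bm{Z}_i)-k_2(\bm{0})\}]=O(h_{22}^{-2})$ should therefore be the main line. Like the paper's bias step, it tacitly needs $k_2$ bounded and a cross moment such as $\mathbb{E}[\|\bm{X}_{1,i}\|\,\|\bm{X}_{2,i}\|^2]<\infty$, which (v) provides only under independence.

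\textbf{Your ``main obstacle''.} Centering handles the small-denominator event better than truncation. Under independence, conditionally on $\bm{X}_{2,1},\dots,\bm{X}_{2,n}$, the second moment of $\widehat{A}/\widehat{B}$ equals $\sigma^2\sum_i k_2^2(\bm{Z}_i)/\{\sum_i k_2(\bm{Z}_i)\}^2$ exactly (scalar case, $\sigma^2=\mathbb{V}[\bm{X}_{1,i}]$). For a bounded nonnegative kernel this quantity behaves as follows:
\begin{itemize}
\item it is at most $\sigma^2$ wherever $\widehat{B}>0$;
\item it is $O(n^{-1})$ on $\{\widehat{B}\ge k_2(\bm{0})/2\}$;
\item for large $n$, Chebyshev bounds the probability of the complement by $O(n^{-1}\mathbb{V}[k_2(\bm{Z}_i)])=o(n^{-1})$.
\end{itemize}
This gives a genuine MSE bound of order $n^{-1}$, which the paper's H\"older step does not deliver. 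By contrast, bounding $|\widehat{m}|$ by $\max_i\|\bm{X}_{1,i}\|$ on the bad event and using Chebyshev would only give $o(1)$ there. For kernels taking negative values the exact MSE of the ratio need not exist, and the AMSE reading you fall back on is the one the paper implicitly adopts.
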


\begin{proof}
The expectation of $(n \|\bm{H}_{22}\|)^{-1} \sum_{i=1}^n \bm{X}_{1,i} k_{2}(\bm{H}_{22}^{-1}(\bm{x}_2-\bm{X}_{2,i}))$ is given by
\begin{align*}
\|\bm{H}_{22}\|^{-1}\int \bm{w}_1 k_2\left(\bm{H}_{22}^{-1} (\bm{x}_2 - \bm{w}_2)\right) f(\bm{w}) {\rm d}\bm{w} =&  \|\bm{H}_{22}\|^{-1} \int \bm{w}_1 [ k_2(\bm{0}) + O(h_{22}^{-2})] f(\bm{w}) {\rm d}\bm{w} \\
\sim& \|\bm{H}_{22}\|^{-1} \mathbb{E}[\bm{X}_{1,i}] [k_2(\bm{0}) + O(h_{22}^{-2})].
\end{align*}

The variance is 
\begin{align*}
& \frac{1}{n} \|\bm{H}_{22}\|^{-2} \int \bm{w}_1 \bm{w}_1' [ k_2^2(\bm{0}) + O(h_{22}^{-2})] f(\bm{w}) {\rm d}\bm{w} \\
&~~~ -  \frac{1}{n} \|\bm{H}_{22}\|^{-2} \{\int \bm{w}_1 [ k_2(\bm{0}) + O(h_{22}^{-2})] f(\bm{w}) {\rm d}\bm{w}\} \{\int \bm{w}_1 [ k_2(\bm{0}) + O(h_{22}^{-2})] f(\bm{w}) {\rm d}\bm{w}\}' \\
\sim& \frac{1}{n} \|\bm{H}_{22}\|^{-2}  k_2^2(\bm{0}) \mathbb{V}[\bm{X}_{1,i}].
\end{align*}
Then, we have 
\begin{align*}
\mathbb{E}[\{  \|\bm{H}_{22}\|^{-1} \mathbb{E}[\bm{X}_{1,i}] k_2(\bm{0}) - 
(n \|\bm{H}_{22}\|)^{-1} \sum_{i=1}^n \bm{X}_{1,i} k_{2}(\bm{H}_{22}^{-1}(\bm{x}_2-\bm{X}_{2,i}))
\}^2] = O(\|\bm{H}_{22}\|^{-2} (h_{22}^{-4} + n^{-1})).
\end{align*}
Applying the H\"{o}lder's inequality to the AMSE and the result of Lemma 1 on $\widehat{f}_2(\bm{x}_2)$ we have the result.
\end{proof}

\begin{cor}
Suppose the conditions of Theorem 1. If $h_{22}^{-4} = O(n^{-1})$ or $o(n^{-1})$, the MSE with the optimal bandwidth is of order $n^{-1}$. 
\end{cor}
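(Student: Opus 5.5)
The corollary follows directly from Theorem 1, so the plan is to take its bound and examine the two terms under the stated growth condition on $h_{22}$. (The theorem writes the exponent as $h_2^{-4}$; I read this as $h_{22}^{-4}$, the scale of $\bm{H}_{22}$.)

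Theorem 1 gives
\begin{align*}
\mathbb{E}[\{\widehat{m}(\bm{x}_2) - \mathbb{E}[\bm{X}_{1,i}]\}^2] = O(h_{22}^{-4} + n^{-1}).
\end{align*}
The first term, $h_{22}^{-4}$, is the squared bias. It comes from the second-order Taylor remainder of $k_2$ at the origin, since $\nabla(k_2)(\bm{0})=\bm{0}$ removes the linear term. The second term, $n^{-1}$, is the variance, and it does not depend on $h_{22}$.

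\textbf{Upper bound.} If $h_{22}^{-4}=O(n^{-1})$, that is, $h_{22}$ grows at least like $n^{1/4}$, then $O(h_{22}^{-4}+n^{-1})=O(n^{-1})$. If $h_{22}^{-4}=o(n^{-1})$, the bias is negligible and the bound is again $O(n^{-1})$. Any bandwidth sequence satisfying either condition therefore achieves MSE of order $n^{-1}$. The optimal bandwidth minimizes the MSE, so its MSE is at most this, and is also $O(n^{-1})$.

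\textbf{Rate cannot be improved.} To show $n^{-1}$ is the exact order, I would argue that the leading variance term is non-degenerate. From the proof of Theorem 1, the numerator has variance asymptotic to $n^{-1}\|\bm{H}_{22}\|^{-2}k_2^2(\bm{0})\mathbb{V}[\bm{X}_{1,i}]$. By Lemma 1, $\|\bm{H}_{22}\|\widehat{f}_2(\bm{x}_2)\to k_2(\bm{0})\neq 0$. Linearizing the ratio $\widehat{m}$ with a delta-method or Slutsky argument then gives variance asymptotic to $n^{-1}\mathbb{V}[\bm{X}_{1,i}]$. This is of exact order $n^{-1}$ for every choice of $h_{22}$ whenever $\mathbb{V}[\bm{X}_{1,i}]$ is non-degenerate.

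\textbf{Main obstacle.} The delicate step is the ratio. The MSE of $\widehat{m}$ is not simply the MSE of the numerator divided by $\{k_2(\bm{0})\}^2$, because the denominator is random. I would rely on the ratio step already used in Theorem 1 (H\"older's inequality together with Lemma 1). This costs only higher-order terms, which is legitimate because $\|\bm{H}_{22}\|\widehat{f}_2(\bm{x}_2)$ is bounded away from zero with probability tending to one. The corollary then follows.
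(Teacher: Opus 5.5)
Your proposal is correct and follows the same route as the paper. The paper gives no separate argument: it reads the corollary off the bound $O(h_{22}^{-4}+n^{-1})$ of Theorem~1, where the $h_2$ is indeed a typo for $h_{22}$. The variance term does not involve $h_{22}$, and the bias term is absorbed once $h_{22}^{-4}=O(n^{-1})$, exactly as you say.

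Your extra paragraph showing that $n^{-1}$ cannot be improved goes beyond what the paper establishes. If you keep it, the cleanest rigorous version is to get the distributional limit of $\sqrt{n}\{\widehat{m}(\bm{x}_2)-\mathbb{E}[\bm{X}_{1,i}]\}$ by Slutsky. Then apply Fatou's lemma to obtain $\liminf_{n} n\,\mathbb{E}[\{\widehat{m}(\bm{x}_2)-\mathbb{E}[\bm{X}_{1,i}]\}^2]\geq \mathbb{V}[\bm{X}_{1,i}]$. Unlike your H\"older-based step, this needs no moment control of the random denominator; a denominator bounded away from zero only with probability tending to one does not by itself control the expectation.
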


Corollary 1 shows the kernel regression estimator converges to the unconditional expectation $\mathbb{E}[\bm{X}_{1,i}]$, where the rate of the convergence is $\sqrt{n}$. The kernel estimator is known as the local constant estimator, and has already been known to be unbiased in the case under the usual bandwidth condition ($h_{22} \to 0$), which results in $\sqrt{n}$-consistency attained by a large enough bandwidth matrix (to be precise $h_{22} = O(1)$ imaginarily).

As seen from the proof, the kernel function and the bandwidth matrix of the numerator and the denominator $\widehat{f}_2$ must be same, different from that of small (i.e. usual) bandwidth matrix.

\subsection{Case of independence in conditional density estimation}

Set $\widehat{f}$ is the kernel density function of $\bm{X}_i$ with the kernel function $k: \mathbb{R}^d \to \mathbb{R}$ and the bandwidth matrix
\begin{align*}
 \bm{H} = \left(\begin{matrix}
 \bm{H}_{11} & \bm{H}_{12} \\
 \bm{H}_{21} & \bm{H}_{22} \\
 \end{matrix}
 \right),
\end{align*}
where $\bm{H}_{k\ell}$ is the $d_k \times d_{\ell}$ matrix and $(\bm{H}_{k\ell})_{ij} \in \mathbb{R}$. All the elements of every blocks are supposed to be same order, or more specifically, $(\bm{H}_{k\ell})_{ij} = O(h_{k\ell})$. Then, we have the following result.

\begin{lemma}
Suppose {\rm (i)} $\bm{H}_{11}$ is regular, $h_{11} \to 0$ and $h_{22} \to \infty$ as $n \to \infty$, {\rm (ii)}
$$\bm{H}_{12}=\bm{O} ~~~ or ~~~ h_{12} = O(h_{11})$$
and
$$\bm{H}_{21}= \bm{O} ~~~ or ~~~ h_{21} = O(h_{11}),$$
{\rm (iii)} $k_2(\bm{0})$ exists, where
$$k_2(\bm{z}_2) = \int k(\bm{z}_1, \bm{z}_2) {\rm d}\bm{z}_1,$$
{\rm (iv)} $k$ is two times continuously differentiable on $\{(\bm{z}_1, \bm{0}) | \bm{z}_1 \in \mathbb{R}^{d_1}\}$, {\rm (v)} $\nabla (k)(\bm{z}_1, \bm{0}) = \bm{0}$ for any $\bm{z}_1$, {\rm (vi)} $\int \bm{z}_1 k(\bm{z}_1, \bm{0}) {\rm d}\bm{z}_1 = \bm{0}$, {\rm (vii)} $f$ is partially continuously differentiable with respect to the variable $\bm{x}_1$ for any $\bm{x}_2$. Then, 
\begin{align*}
\|\bm{H}_{22}\| \widehat{f}(\bm{x}) - k_2(\bm{0}) {f}_1(\bm{x}_1) = O_P(n^{-1/2} \|\bm{H}_{11}\|^{-1/2}) + O(h_{11}^2 + h_{22}^{-2}).
\end{align*}
where $f_1$ is the probability density function of $\bm{X}_{1,i}$.
\end{lemma}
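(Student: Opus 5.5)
Following the proof of Lemma 1, we split $\|\bm{H}_{22}\|\widehat{f}(\bm{x}) - k_2(\bm{0})f_1(\bm{x}_1)$ into a bias term, $\|\bm{H}_{22}\|\mathbb{E}[\widehat{f}(\bm{x})] - k_2(\bm{0})f_1(\bm{x}_1)$, and a stochastic term, which we handle through the variance and Chebyshev's inequality. Throughout, we write $\|\bm{H}\|$ through the block structure. Condition (ii) makes the off-diagonal blocks negligible relative to $\bm{H}_{22}$, so the Schur complement gives $\|\bm{H}\| \sim \|\bm{H}_{11}\|\,\|\bm{H}_{22}\|$ to leading order. In the same way $\bm{H}^{-1}$ is, to leading order, block diagonal with blocks $\bm{H}_{11}^{-1}$ (diverging, of order $h_{11}^{-1}$) and $\bm{H}_{22}^{-1}$ (vanishing, of order $h_{22}^{-1}$). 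The cross blocks are of order at most $h_{22}^{-1}$ after multiplication by $\bm{H}_{12}\bm{H}_{22}^{-1}$ or $\bm{H}_{22}^{-1}\bm{H}_{21}$ factors. We must check that these cross contributions are absorbed into the stated remainders.

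For the bias we change variables in the first $d_1$ coordinates. In
\[
\|\bm{H}_{22}\|\,\mathbb{E}[\widehat{f}(\bm{x})] = \|\bm{H}_{11}\|^{-1}\int k\bigl(\bm{H}^{-1}(\bm{x}-\bm{w})\bigr) f(\bm{w})\,{\rm d}\bm{w}
\]
we substitute $\bm{z}_1$ for the first block of $\bm{H}^{-1}(\bm{x}-\bm{w})$, that is, $\bm{w}_1 = \bm{x}_1 - \bm{H}_{11}\bm{z}_1 + (\text{cross terms})$. This absorbs $\|\bm{H}_{11}\|^{-1}$, and the second argument becomes $\bm{z}_2 = \bm{H}_{22}^{-1}(\bm{x}_2-\bm{w}_2) + O(\cdot)$, which tends to $\bm{0}$ at rate $h_{22}^{-1}$ for $\bm{w}_2$ in the bulk. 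We then expand $k(\bm{z}_1,\bm{z}_2)$ in $\bm{z}_2$ around $(\bm{z}_1,\bm{0})$. By (iv) and (v) the linear term vanishes, leaving $k(\bm{z}_1,\bm{0}) + O(h_{22}^{-2}\|\bm{x}_2-\bm{w}_2\|^2)$. Separately, we expand $f(\bm{x}_1 - \bm{H}_{11}\bm{z}_1, \bm{w}_2)$ in $\bm{x}_1$ using (vii). The first-order term is killed by (vi), and the second-order term gives $O(h_{11}^2)$; note that this step really needs $f$ twice differentiable in $\bm{x}_1$, or at least a Lipschitz gradient, so we would strengthen (vii) tacitly as usual. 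Integrating $f(\bm{x}_1,\bm{w}_2)$ over $\bm{w}_2$ yields $f_1(\bm{x}_1)$. The prefactor is $\int k(\bm{z}_1,\bm{0})\,{\rm d}\bm{z}_1$, which should be identified with $k_2(\bm{0})$ via (iii). The mixed terms are of order $h_{11}^2 + h_{22}^{-2}$, which gives the deterministic remainder. To justify interchanging the expansion and the integral over $\bm{w}_2$ we need a second moment of $\bm{X}_{2}$, as in Lemma 1 and Theorem 1.

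For the variance we note that $\|\bm{H}_{22}\|^2\,\mathbb{V}[\widehat{f}(\bm{x})] \le n^{-1}\|\bm{H}_{11}\|^{-2}\,\mathbb{E}[k^2(\bm{H}^{-1}(\bm{x}-\bm{X}_i))]$. The same substitution turns the expectation into $\|\bm{H}_{11}\|\bigl\{\int k^2(\bm{z}_1,\bm{0})\,{\rm d}\bm{z}_1\, f_1(\bm{x}_1) + o(1)\bigr\}$. The variance is therefore $O(n^{-1}\|\bm{H}_{11}\|^{-1})$, which yields the $O_P(n^{-1/2}\|\bm{H}_{11}\|^{-1/2})$ term by Chebyshev's inequality. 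Here we need $\int k^2(\bm{z}_1,\bm{0})\,{\rm d}\bm{z}_1 < \infty$, which follows from the kernel being square integrable on that slice.

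The main obstacle is the treatment of the off-diagonal blocks. Condition (ii) only says $h_{12}, h_{21} = O(h_{11})$, so we must verify that the substitution in $\bm{z}_1$ does not reintroduce $\bm{w}_2$-dependence at a non-negligible order. The term $\bm{H}_{12}$ times the second coordinate is of order $h_{11}\cdot h_{22}^{-1}\|\bm{x}_2-\bm{w}_2\|$. Our plan is to compute $\bm{H}^{-1}$ exactly by the block inversion formula and show that every cross contribution is of order $h_{11}h_{22}^{-1}$. Such terms enter the bias only quadratically or in product with vanishing first moments, so they are $O(h_{11}^2 + h_{22}^{-2})$ by the inequality $ab \le a^2+b^2$. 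The other subtle point is the identification $\int k(\bm{z}_1,\bm{0})\,{\rm d}\bm{z}_1 = k_2(\bm{0})$, which is immediate from the definition of $k_2$.
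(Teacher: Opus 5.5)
Your proposal is correct and follows essentially the paper's own route. It uses a bias--variance split, block inversion of $\bm{H}$ with a change of variables in the first $d_1$ coordinates, expansion of $k$ in its second argument about $(\bm{z}_1,\bm{0})$ and of $f$ in $\bm{x}_1$ with the first-order term killed by (vi), and the same substitution in the second moment to get variance $O(n^{-1}\|\bm{H}_{11}\|^{-1})$.

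The differences are cosmetic. The paper uses the exact identity $\|\bm{H}\| = \|\bm{H}_{22}\|\,\|\bm{\Xi}\|$ and substitutes $\bm{z}_1=\bm{\Xi}^{-1}(\bm{x}_1-\bm{w}_1)$, whereas you use a leading-order factorization and absorb the $O(h_{11}h_{22}^{-1})$ cross terms via $ab\le a^2+b^2$. Your remarks that the $O(h_{11}^2)$ term needs second-order smoothness of $f$ in $\bm{x}_1$, and that the $\bm{w}_2$-integration needs a moment condition, correctly identify assumptions the paper's proof uses without stating them.
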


\begin{proof}
Suppose $\bm{H}_{12}\neq\bm{O}$ and $\bm{H}_{21}\neq\bm{O}$ in this proof. The regularity of $\bm{H}$ does not mean $\bm{H}_{11}$ is invertible; however, let $\bm{\Xi}^{-1} := \bm{H}_{11}^{-1} + \bm{H}_{11}^{-1} \bm{H}_{12} \bm{\Upsilon}^{-1} \bm{H}_{21} \bm{H}_{11}^{-1}$ and $\bm{\Upsilon} := \bm{H}_{22} - \bm{H}_{21} \bm{H}_{11}^{-1} \bm{H}_{12}$ for notation. It holds that 
$$\bm{H}^{-1} =\left(\begin{matrix}
\bm{\Xi}^{-1} & -\bm{H}_{11}^{-1} \bm{H}_{12} \bm{\Upsilon}^{-1}\\
-\bm{\Upsilon}^{-1} \bm{H}_{21} \bm{H}_{11}^{-1} & \bm{\Upsilon}^{-1} \\
 \end{matrix}
 \right).$$
On the determinant of the block matrix it holds
\begin{align*}
|\bm{H}|^{-1} = |\bm{H}_{11}|^{-1} |\bm{H}_{22} - \bm{H}_{21} \bm{H}_{11}^{-1} \bm{H}_{12}|^{-1} = |\bm{H}_{11}|^{-1} |\bm{\Upsilon}|^{-1}
\end{align*}
and the Sylvester's determinant theorem yields
\begin{align*}
|\bm{\Xi}|^{-1} =& |\bm{\Xi}^{-1}| = |\bm{H}_{11}|^{-2} |\bm{H}_{11} + \bm{H}_{12} \bm{\Upsilon}^{-1} \bm{H}_{21}| \\
=& |\bm{H}_{11}|^{-1} |\bm{\Upsilon}|^{-1} |\bm{\Upsilon} + \bm{H}_{21} \bm{H}_{11}^{-1} \bm{H}_{12}| = |\bm{H}_{11}|^{-1} |\bm{\Upsilon}|^{-1} |\bm{H}_{22}|.
\end{align*}
Combining them, we have $\|\bm{H}\| \|\bm{\Xi}^{-1}\| = \|\bm{H}_{22}\|$. 

$(\bm{\Xi}^{-1})_{ij} = O(h_{11}^{-1})$, $(\bm{\Upsilon}^{-1})_{ij} = O(h_{22}^{-1})$, $(-\bm{H}_{11}^{-1} \bm{H}_{12} \bm{\Upsilon}^{-1})_{ij} = O(h_{22}^{-1})$, $(-\bm{\Upsilon}^{-1} \bm{H}_{21} \bm{H}_{11}^{-1} \bm{\Xi})_{ij} = O(h_{11} h_{22}^{-1})$ i.e. the order of the elements of $\bm{H}^{-1}$ is the form of
$$\left(\begin{matrix}
h_{11}^{-1} & h_{22}^{-1} \\
h_{22}^{-1} & h_{22}^{-1} \\
 \end{matrix}
 \right).$$
By changing variable $\bm{\Xi}^{-1} (\bm{x}_1 - \bm{w}_1) = \bm{z}_1$ and $(\bm{x}_2 - \bm{w}_2) = \bm{z}_2$, $\mathbb{E}[\widehat{f}(\bm{x})] = \|\bm{H}\|^{-1} \int k\left(\bm{H}^{-1} (\bm{x} - \bm{w})\right) f(\bm{w}) {\rm d}\bm{w}$ is given by
\begin{align*}
\mathbb{E}[\widehat{f}(\bm{x})] =& \|\bm{H}\|^{-1} \|\bm{\Xi}\| \int [k(\bm{z}_1, \bm{0}) + \{(-\bm{z}_2' (\bm{\Upsilon}^{-1})' \bm{H}_{12}' (\bm{H}_{11}^{-1})', \\
& ~~~ -\bm{z}_1' \bm{\Xi}' (\bm{H}_{11}^{-1})' \bm{H}_{21}' (\bm{\Upsilon}^{-1})' + \bm{z}_2'(\bm{\Upsilon}^{-1})')' \nabla(k) (\bm{z}_1, \bm{0})\} + O(h_{22}^{-2})] \\
& \times \{f(\bm{x}_1, \bm{x}_2 -\bm{z}_2) - \bm{z}_1' \bm{\Xi}' \nabla_1(f)(\bm{x}_1, \bm{x}_2 - \bm{z}_2) + O(h_{11}^2) \} {\rm d}\bm{z}_1 {\rm d}\bm{z}_2 \\
=& \|\bm{H}_{22}\|^{-1}| \{k_2(\bm{0}) f_1(\bm{x}_1) + O(h_{11}^2 + h_{22}^{-2})\}, \\
\mathbb{E}[\|\bm{H}\|^{-2} & k^2\left(\bm{H}^{-1} (\bm{x} - \bm{X}_i)\right)] \sim \|\bm{H}\|^{-2} \|\bm{\Xi}\| \int k^2(\bm{z}_1, \bm{0}) f(\bm{x}_1, \bm{x}_2 -\bm{w}_2) {\rm d}\bm{z}_1 {\rm d}\bm{w}_2
\end{align*}
where $\nabla_1$ is the vector differential operator applied to the first $d_1$ variables. Thus, we have
\begin{align*}
\mathbb{E}[\widehat{f}^2(\bm{x})] - (\mathbb{E}[\widehat{f}(\bm{x})])^2 
\sim& \frac{1}{n} \|\bm{H}\|^{-1} \|\bm{H}_{22}\|^{-1} \left[ f_1(\bm{x}_1) \int k^2(\bm{z}_1, \bm{0}) {\rm d}\bm{z}_1 - \|\bm{\Xi}\| \{ k_2(\bm{0}) f_1(\bm{x}_1)\}^2 \right].
\end{align*}

For the rest case $\bm{H}_{12}=\bm{O}$ or $\bm{H}_{21}=\bm{O}$, still $(\bm{\Xi}^{-1})_{ij} = O(h_{11}^{-1})$ and $(\bm{\Upsilon}^{-1})_{ij} = O(h_{22}^{-1})$, and the above proof follows in the same way.
\end{proof}

Lemma 2 shows the kernel conditional density estimator converges to the unconditional density $f_1$, where the rate of the convergence can depend on not $d=d_1 + d_2$ but $d_1$. 

The Slutsky's theorem states the conditional density estimator $(\widehat{f}_2 (\bm{x}_2))^{-1} \widehat{f}(\bm{x})$ stochastically converges to $f_1(\bm{x}_1)$ if both $\widehat{f}(\bm{x})$ and $\widehat{f}_2(\bm{x}_2)$ are consistent if the corresponding bandwidth matrix $\bm{H}_{22}$ is shared and the kernel function of $\widehat{f}$ satisfies $\int k(\bm{z}_1, \bm{0}) {\rm d}\bm{z}_1 = k_2(\bm{0})$, where $k_2$ is the kernel function of $\widehat{f}_2$.

Combining Lemma 1 and 2 we have the following consequence.

\begin{theorem}
Suppose the conditions of Lemma 2 regarding $\widehat{f}(\bm{x})$. Then, 
\begin{align*}
\mathbb{E}[ \{ \widehat{f}_2(\bm{x}_2)^{-1} \widehat{f}(\bm{x}) - {f}_1(\bm{x}_1) \}^2] = O(h_{11}^4 + h_{22}^{-4} + n^{-1} \|\bm{H}_{11}\|^{-1}).
\end{align*}
\end{theorem}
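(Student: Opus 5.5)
The plan is to write the conditional density estimator as a ratio of the two rescaled quantities that Lemma 1 and Lemma 2 already control. Both share the bandwidth block $\bm{H}_{22}$, so I would set
\begin{align*}
A_n := \|\bm{H}_{22}\| \widehat{f}(\bm{x}), \qquad B_n := \|\bm{H}_{22}\| \widehat{f}_2(\bm{x}_2),
\end{align*}
and note that $\widehat{f}_2(\bm{x}_2)^{-1}\widehat{f}(\bm{x}) = A_n/B_n$, because the factor $\|\bm{H}_{22}\|$ cancels. This is exactly the point emphasised after Theorem 1: the scaling only cancels when the denominator uses the same $\bm{H}_{22}$. I also take the kernels compatible, $\int k(\bm{z}_1,\bm{0}) {\rm d}\bm{z}_1 = k_2(\bm{0}) \neq 0$, as in the Slutsky remark after Lemma 2.

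The mean-square rates for the two pieces come directly from the earlier results.
\begin{itemize}
\item From the proof of Lemma 2, the bias of $A_n$ is $\mathbb{E}[A_n] - k_2(\bm{0}) f_1(\bm{x}_1) = O(h_{11}^2 + h_{22}^{-2})$. Its variance is $O(n^{-1}\|\bm{H}_{11}\|^{-1})$: the displayed variance carries $\|\bm{H}\|^{-1}\|\bm{H}_{22}\|^{-1}$, and $\|\bm{H}\| = \|\bm{H}_{11}\|\|\bm{\Upsilon}\|$ with $\|\bm{\Upsilon}\| \asymp \|\bm{H}_{22}\|$. Hence $\mathbb{E}[\{A_n - k_2(\bm{0}) f_1(\bm{x}_1)\}^2] = O(h_{11}^4 + h_{22}^{-4} + n^{-1}\|\bm{H}_{11}\|^{-1})$.
\item From Lemma 1 applied to $\widehat{f}_2$ with $h = h_{22}$, the bias of $B_n$ is $O(h_{22}^{-2})$ and its variance is $O(n^{-1}h_{22}^{-2})$. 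So $\mathbb{E}[\{B_n - k_2(\bm{0})\}^2] = O(h_{22}^{-4} + n^{-1}h_{22}^{-2})$, and this is negligible against the previous bound since $h_{22}^{-2} \to 0$ and $\|\bm{H}_{11}\|^{-1} \to \infty$.
\end{itemize}

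Next I linearise the ratio. With $a := k_2(\bm{0}) f_1(\bm{x}_1)$ and $b := k_2(\bm{0})$,
\begin{align*}
\frac{A_n}{B_n} - \frac{a}{b} = \frac{A_n - a}{b} - \frac{a (B_n - b)}{b^2} + R_n,
\end{align*}
where $R_n = (A_n - a)(b - B_n)/(b B_n) + a(B_n - b)^2/(b^2 B_n)$. Using $(u+v+w)^2 \le 3(u^2+v^2+w^2)$, the first two terms contribute exactly the rates above. $R_n$ is quadratic in the deviations and is controlled by Cauchy--Schwarz (the H\"older step used in Theorem 1), so it is of smaller order.

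The main obstacle is that $B_n$ sits in a denominator, so a moment bound needs $B_n$ bounded away from zero with high enough probability. I would handle this the way Theorem 1 implicitly does: work on the event $\{|B_n - b| \le |b|/2\}$. On the complement, whose probability is $O(h_{22}^{-4} + n^{-1}h_{22}^{-2})$ by Chebyshev, I would either use that the ratio is bounded (for a nonnegative kernel $A_n/B_n$ is a weighted average of kernel values in $\bm{x}_1$, hence $O(\|\bm{H}_{11}\|^{-1})$) or interpret the MSE as an asymptotic MSE via the Slutsky remark. If the crude complement bound is too weak, I would fall back on stating the result for the AMSE, which is what the paper's earlier arguments deliver. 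Combining the pieces gives $O(h_{11}^4 + h_{22}^{-4} + n^{-1}\|\bm{H}_{11}\|^{-1})$.
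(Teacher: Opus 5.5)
Your proposal follows the paper's proof. The paper likewise linearizes the ratio around $(k_2(\bm{0}) f_1(\bm{x}_1), k_2(\bm{0}))$, bounds $\|\bm{H}_{22}\|\widehat{f}(\bm{x}) - k_2(\bm{0}) f_1(\bm{x}_1)$ by Lemma 2 and $\|\bm{H}_{22}\|\widehat{f}_2(\bm{x}_2) - k_2(\bm{0})$ by Lemma 1, combines them with H\"{o}lder's inequality, and in fact argues only at the level of the AMSE, which is your fallback.

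Your truncation step goes further than the paper, but as written it would not close the exact-MSE gap. First, the weighted-average bound on $A_n/B_n$ requires the numerator kernel to factor through the same $k_2(\bm{H}_{22}^{-1}(\bm{x}_2 - \bm{X}_{2,i}))$, for example a product kernel with block-diagonal $\bm{H}$. Second, even in that case Chebyshev is too crude; applying Hoeffding's inequality to the bounded summands of $B_n$, whose mean tends to $k_2(\bm{0})$, makes the complement's probability, and hence its contribution, exponentially small.
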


\begin{proof}
Under the conditions of Lemma 2, the assumption of Lemma 1 regarding $\widehat{f}_2(\bm{x}_2)$ holds. The AMSE is asymptotically equals to
$$\{k_2(\bm{0})\}^{-2} \mathbb{E}[ \{
\|\bm{H}_{22}\| (\widehat{f}(\bm{x})- \|\bm{H}_{22}\|^{-1} k_2(\bm{0}) f_1(\bm{x}_1)) - \|\bm{H}_{22}\|^2 f_1(\bm{x}_1) (\widehat{f}_2(\bm{x}_2) -k_2(\bm{0}))
\}^2].$$
Applying Lemma 1 and 2 and the H\"{o}lder's inequality we have the desired result.
\end{proof}

Under the independent condition the conditional density estimation is merely the marginal density estimation. The following result show the optimal convergence rate coincides with that of the kernel (marginal) density estimator.
\begin{cor}
Suppose the conditions of Theorem 2. If $h_{22}^{-1}=O(h_{11})$ or $o(h_{11})$, minimizing the AMSE yields $h_{11}=O(n^{-(d_1 +4)^{-1}})$ and the MSE with the optimal bandwidth is of order $n^{-4(d_1 +4)^{-1}}$. 
\end{cor}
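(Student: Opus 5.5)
The plan is to start from the AMSE bound of Theorem 2,
\begin{align*}
\mathrm{AMSE}(h_{11},h_{22}) \asymp C_1 h_{11}^4 + C_2 h_{22}^{-4} + C_3 n^{-1}\|\bm{H}_{11}\|^{-1},
\end{align*}
and optimise it in the two scale parameters. The first step is to express $\|\bm{H}_{11}\|$ through $h_{11}$. Since $\bm{H}_{11}$ is regular and all its entries are of the same order $h_{11}$, we may write $\bm{H}_{11}=h_{11}\bm{C}_{11}$ with $\bm{C}_{11}$ a fixed regular matrix, as in the remark after Lemma 1 following \cite{wand1995kernel}. Then $\|\bm{H}_{11}\| = h_{11}^{d_1}\|\bm{C}_{11}\|$, and the variance term is of exact order $n^{-1}h_{11}^{-d_1}$.

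Next, the condition $h_{22}^{-1}=O(h_{11})$ (or $o(h_{11})$) gives $h_{22}^{-4}=O(h_{11}^4)$. The bias contribution of the irrelevant block is therefore absorbed into the $h_{11}^4$ term, and the bound becomes $O(h_{11}^4 + n^{-1}h_{11}^{-d_1})$. The remaining problem is the usual one-parameter trade-off. One term increases in $h_{11}$ and the other decreases, so the rate-optimal choice balances them: $h_{11}^4 \asymp n^{-1}h_{11}^{-d_1}$. This gives $h_{11}^{d_1+4}\asymp n^{-1}$, that is, $h_{11}=O(n^{-(d_1+4)^{-1}})$. Substituting back, both terms are of order $n^{-4(d_1+4)^{-1}}$, which is the claimed MSE rate. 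Taking $h_{11}$ smaller inflates the variance term and taking it larger inflates the bias, so no other choice of $h_{11}$ improves the order.

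The main obstacle is to justify treating this as a genuine minimisation rather than just an upper bound. Theorem 2 only gives $O(\cdot)$, so a lower bound of the same order requires the leading constants of the bias and variance to be nonzero. For the variance, the constant is read off the proof of Lemma 2: $f_1(\bm{x}_1)\int k^2(\bm{z}_1,\bm{0})\,{\rm d}\bm{z}_1>0$. For the bias, one uses the second-order term in $h_{11}$. I would either assume these constants are nonzero or phrase the corollary as the rate attained by the bound-minimising bandwidth. A second point to check is that the $h_{22}$ condition can actually be met alongside $h_{22}\to\infty$. This is immediate: for example, take $h_{22}\asymp h_{11}^{-1}$, which diverges because $h_{11}\to0$. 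With that choice the $h_{22}^{-4}$ term is of the same order as the bias and does not change the rate. Finally, I would note that the rate depends on $d_1$ only, matching the marginal kernel density estimator in dimension $d_1$.
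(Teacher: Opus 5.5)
Your argument is correct and is the same derivation that underlies the paper's corollary, which the paper states without a separate proof: the condition $h_{22}^{-1}=O(h_{11})$ absorbs $h_{22}^{-4}$ into $h_{11}^4$, then $\|\bm{H}_{11}\|\asymp h_{11}^{d_1}$, then $h_{11}^4$ is balanced against $n^{-1}h_{11}^{-d_1}$. Your remarks on nonvanishing leading constants (needed for a true minimisation rather than a bound) and on the feasibility of $h_{22}\asymp h_{11}^{-1}$ add care the paper omits, but they do not change the route.
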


\begin{remark}
Under the assumption of the kernel $k$ being a probability density function, the expectation of the kernel density estimator with the blockwise bandwidth matrix can be seen as the probability density function of the random variable $\bm{X} + \bm{H} \bm{K}$. $\bm{K} := (\bm{K}_{1}', \bm{K}_{2}')'$ is the random variable independent of $\bm{X}$, whose probability density function is $k$. Due to the magnifying and shrinking $\bm{H}$ for large enough $n$ the random variable is dominated by $(\bm{X}_{1}', (\bm{H}_{22} \bm{K}_2)')'$, where the last $d_2$ variable asymptotically follows uniform on $\mathbb{R}^{d_2}$.
\end{remark}

\begin{remark}
Let
\begin{align*}
\bm{H} = \left(\begin{matrix}
 h_{11} \bm{C}_{11} & h_{11} \bm{C}_{12} \\
 h_{11} \bm{C}_{21} & h_{11} \bm{C}_{22} \\
 \end{matrix}
 \right),
\end{align*}
where $\bm{C}_{11}$, $\bm{C}_{12}$, $\bm{C}_{21}$ and $\bm{C}_{22}$ are constant matrices with the size of $d_1 \times d_1$, $d_1 \times d_2$, $d_2 \times d_1$, and $d_2 \times d_2$ satisfying $\bm{C}_{11} \neq \bm{O}$ being regular and $\bm{C}_{22} \neq \bm{O}$. 
\begin{align*}
\bm{E}_n := \left(\begin{matrix}
 \bm{E}_{11} & \bm{O} \\
 \bm{O} & h_{11}^{-1} h_{22} \bm{E}_{22} \\
 \end{matrix}
 \right),
\end{align*}
where $\bm{E}_{11}$ and $\bm{E}_{22}$ is the identity matrices with the size $d_1 \times d_1$ and $d_2 \times d_2$.  Then, 
\begin{align*}
\bm{E}_n \bm{H} = \left(\begin{matrix}
h_{11} \bm{C}_{11} & h_{11} \bm{C}_{12} \\
h_{22} \bm{C}_{21} & h_{22} \bm{C}_{22} \\
 \end{matrix}
 \right),
\end{align*}
and $\bm{E}_n \bm{H}$ satisfies the assumption on the bandwidth matrix (i.e. {\rm (i)}--{\rm (ii)} in Lemma 2) if $\bm{C}_{21} = \bm{O}$.

Since
\begin{align*}
& (n \|\bm{E}_n \bm{H}\|)^{-1} \sum_{i=1}^n k((\bm{E}_n \bm{H})^{-1} (\bm{x}-\bm{X}_i))\\
=& \|\bm{E}_n\|^{-1} (n \|\bm{H}\|)^{-1} \sum_{i=1}^n k(\bm{H}^{-1} ((\bm{x}_1-\bm{X}_{1,i})', h_{22}^{-1} h_{11}(\bm{x}_2-\bm{X}_{2,i})')'),
\end{align*}
$\widehat{f}$ with the (partially) large bandwidth matrix $\bm{E}_n \bm{H}$ is $\|\bm{E}_n\|^{-1}$ times the the kernel density estimator with the small bandwidth matrix $\bm{H}$ for the random variable $(\bm{X}_{1,i}', h_{22}^{-1} h_{11} \bm{X}_{2,i}')'$.

The expectation of the kernel density estimator can be seen to be the probability density function of the random variable 
$$\bm{X}_i+ \bm{E}_n \bm{H} \bm{K}$$
and $\|\bm{E}_n\|^{-1}$ times that of
$$(\bm{X}_{1,i}', h_{22}^{-1} h_{11} \bm{X}_{2,i}')' + \bm{H} \bm{K},$$
where $\bm{K}$ is the random variable independent of $(\bm{X}_{1,i}', h_{22}^{-1} h_{11} \bm{X}_{2,i}')'$ for every $n$. The random variable is dominated by $(\bm{X}_{1}', \bm{0}')'$, where the last $d_3$ variable asymptotically shrinks.
\end{remark}

\begin{remark}
The bandwidth matrix assumption 
$$\bm{H}_{12}=\bm{O} ~~~ or ~~~ h_{12} = O(h_{11})$$
and
$$\bm{H}_{21}= \bm{O} ~~~ or ~~~ h_{21} = O(h_{11})$$
with $h_{11} \to 0$ and $h_{22} \to \infty$ as $n \to \infty$, is sufficient to make
$$\widehat{f}_2(\bm{x}_2)^{-1} \widehat{f}(\bm{x}) \overset{p}{\to} {f}_1(\bm{x}_1).$$
For $h_{21} \to \infty$ the bandwidth of $\widehat{f}_2$ needs to be well-designed (see Appendix: Additional results).
\end{remark}

\section{Case of conditional independence in regression or conditional density estimation}

This section proves the kernel estimators with all possible bandwidth matrices attains the optimal convergence rates. This suggests bandwidth selection methods asymptotically minimizing the mean integrated squared errors choose the theoretically optimal ones without specifying the dependence structure. The next section studies the numerical properties of bandwidth selection methods.

\subsection{Case of conditional independence in regression estimation}

Let the $d$-dimensional r.v. be $\bm{X}_i := (\bm{X}_{1,i}', \bm{X}_{2,i}', \bm{X}_{3,i}')'$, $\bm{Z}_i = (\bm{X}_{2,i}', \bm{X}_{3,i}')'$ and the probability density function of the $d_2 + d_3$-dimensional random variable $\bm{Z}_i$ be $f_{23}$. Set $\widehat{f}_{23}$ is the kernel density estimator of $\bm{Z}_i$ given by
\begin{align*}
\widehat{f}_{23}(\bm{z}) := (n \|\bm{\Omega}\|)^{-1} \sum_{i=1}^n k_{23}(\bm{\Omega}^{-1}(\bm{z}-\bm{Z}_i)),
\end{align*}
$k_{23} : \mathbb{R}^{d_2 + d_3} \to \mathbb{R}$ and the bandwidth is supposed to be the $(d_2 +d_3) \times (d_2 + d_3)$ matrix 
\begin{align*}
\bm{\Omega} = \left(\begin{matrix}
 \bm{H}_{22} & \bm{H}_{23} \\
 \bm{H}_{32} & \bm{H}_{33} \\
 \end{matrix}
 \right),
\end{align*}
where all the elements of every blocks are supposed to be same order. Let
$$\bm{\Omega}^{-1} :=\left(\begin{matrix}
\bm{\Xi}^{-1} & -\bm{H}_{22}^{-1} \bm{H}_{23} \bm{\Upsilon}^{-1}\\
-\bm{\Upsilon}^{-1} \bm{H}_{32} \bm{H}_{22}^{-1} & \bm{\Upsilon}^{-1} \\
 \end{matrix}
 \right),$$
where $\bm{\Upsilon} = \bm{H}_{33} - \bm{H}_{32} \bm{H}_{22}^{-1} \bm{H}_{23}$ and $\bm{\Xi}^{-1} = \bm{H}_{22}^{-1} + \bm{H}_{22}^{-1} \bm{H}_{23} \bm{\Upsilon}^{-1} \bm{H}_{32} \bm{H}_{22}^{-1}$. The following result on the kernel regression estimator 
\begin{align*}
\widehat{m}(\bm{z}) := \widehat{f}_{23}(\bm{z})^{-1} (n \|\bm{\Omega}\|)^{-1} \sum_{i=1}^n \bm{X}_{1,i} k_{23}(\bm{\Omega}^{-1}(\bm{z}-\bm{Z}_i)),
\end{align*}
is also the consequence of Lemma 2.

\begin{theorem}
Suppose {\rm (i)} $\bm{H}_{22}$ is regular, $h_{22} \to 0$ and $h_{33} \to \infty$, {\rm (ii)} 
$$\bm{H}_{23}=\bm{O} ~~~ and  ~~~ \bm{H}_{32}= \bm{O}$$
or
$$h_{23} = O(h_{22}) ~~~ and ~~~ h_{32} = O(h_{22}),$$
{\rm (iii)} $k_3(\bm{0}) \neq 0$ exists, where
$$k_{3}(\bm{z}_3) = \int k_{23}(\bm{z}_2, \bm{z}_3) {\rm d}\bm{w}_2,$$
{\rm (iv)} $k_{23}$ is two times continuously differentiable on $\{(\bm{z}_2, \bm{0}) | \bm{z}_2 \in \mathbb{R}^{d_2}\}$, {\rm (v)} $\int \bm{z}_2 k_{23}(\bm{z}_2, \bm{0}) {\rm d}\bm{z}_2 = \bm{0}$, {\rm (vi)} $1$st moment of $f$ exist, {\rm (vii)} $f$ is partially continuously differentiable with respect to the variable $\bm{x}_2$ for any $\bm{x}_3$. Then, it holds that
\begin{align*}
\mathbb{E}[\{\widehat{m}(\bm{z}) - m(\bm{x}_2)\}^2] = O(h_{22}^4 + h_{33}^{-4} + n^{-1} \|\bm{H}_{22}\|^{-1}),
\end{align*}
where
$$m(\bm{x}_2) := \int \bm{w}_{1} \frac{f_{12}(\bm{w}_1, \bm{x}_2)}{f_{2}(\bm{x}_2)} {\rm d}\bm{w}_1,$$
$f_{12}$ and $f_2$ are the probability density function of $(\bm{X}_{1,i},\bm{X}_{2,i})$ and $\bm{X}_{2,i}$ respectively.
\end{theorem}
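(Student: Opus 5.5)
The plan is to write $\widehat{m}(\bm{z}) = \widehat{g}(\bm{z})/\widehat{f}_{23}(\bm{z})$, where
$$\widehat{g}(\bm{z}) := (n\|\bm{\Omega}\|)^{-1}\sum_{i=1}^n \bm{X}_{1,i} k_{23}(\bm{\Omega}^{-1}(\bm{z}-\bm{Z}_i)),$$
and to treat numerator and denominator by the device already used in Lemma 2, with the roles $(1,2)$ there played by $(2,3)$ here. The factor $\bm{X}_{1,i}$ in the numerator is handled exactly as in Theorem 1.

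\textbf{Denominator.} The $\bm{X}_2$-block of $\bm{\Omega}$ is small and the $\bm{X}_3$-block is large, so I apply Lemma 2 verbatim to $\bm{Z}_i=(\bm{X}_{2,i}',\bm{X}_{3,i}')'$. This gives
$$\|\bm{H}_{33}\|\widehat{f}_{23}(\bm{z}) = k_3(\bm{0}) f_2(\bm{x}_2) + O(h_{22}^2+h_{33}^{-2}) + O_P\bigl(n^{-1/2}\|\bm{H}_{22}\|^{-1/2}\bigr).$$
Its mean-square version has order $h_{22}^4+h_{33}^{-4}+n^{-1}\|\bm{H}_{22}\|^{-1}$.

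\textbf{Numerator.} I compute $\mathbb{E}[\widehat{g}(\bm{z})]$ with the same change of variables as in Lemma 2: $\bm{\Xi}^{-1}(\bm{x}_2-\bm{w}_2)=\bm{u}_2$, $\bm{x}_3-\bm{w}_3=\bm{u}_3$, together with the identity $\|\bm{\Omega}\|\,\|\bm{\Xi}^{-1}\|=\|\bm{H}_{33}\|$. Expanding $k_{23}$ around $(\bm{u}_2,\bm{0})$ in the $O(h_{33}^{-1})$ arguments, and expanding the integrand in $\bm{w}_2$ around $\bm{x}_2$, gives
$$\|\bm{H}_{33}\|^{-1}\int\!\!\int \bm{w}_1\, k_{23}(\bm{u}_2,\bm{0})\, f(\bm{w}_1,\bm{x}_2,\bm{x}_3-\bm{u}_3)\,{\rm d}\bm{u}_2\,{\rm d}\bm{w}_1\,{\rm d}\bm{u}_3 .$$
The remainder is $O(h_{22}^2+h_{33}^{-2})$, using condition (v) to kill the linear term in $\bm{u}_2$. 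Integrating out $\bm{u}_3$ gives $\|\bm{H}_{33}\|^{-1}k_3(\bm{0})\int \bm{w}_1 f_{12}(\bm{w}_1,\bm{x}_2)\,{\rm d}\bm{w}_1$. Hence $\mathbb{E}[\|\bm{H}_{33}\|\widehat{g}] = k_3(\bm{0})\,m(\bm{x}_2)f_2(\bm{x}_2) + O(h_{22}^2+h_{33}^{-2})$. The first-moment condition (vi) makes the $\bm{w}_1$-integrals finite.

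For the variance I repeat the second-moment computation of Lemma 2 with $\bm{w}_1\bm{w}_1'$ inserted. This gives $\mathbb{V}[\|\bm{H}_{33}\|\widehat{g}] = O(n^{-1}\|\bm{H}_{22}\|^{-1})$; strictly this needs a finite conditional second moment of $\bm{X}_1$, which I would assume implicitly, as Theorem 1 did. Note that the conditional-independence structure is built into the definition of $m$: the large $\bm{H}_{33}$ integrates $\bm{x}_3$ out, so the target is $\mathbb{E}[\bm{X}_1\mid \bm{X}_2=\bm{x}_2]$.

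\textbf{Ratio.} Set $A=\|\bm{H}_{33}\|\widehat{g}$ and $B=\|\bm{H}_{33}\|\widehat{f}_{23}$, with limits $a=k_3(\bm{0})mf_2$ and $b=k_3(\bm{0})f_2$. Linearize
$$A/B - a/b = b^{-1}(A-a) - a\,b^{-2}(B-b) + \text{higher order}.$$
Here $k_3(\bm{0})\neq0$ and $f_2(\bm{x}_2)>0$ ensure $b\neq0$. Then I apply $(u+v)^2\le 2u^2+2v^2$ (the Hölder step of Theorems 1 and 2) together with the two bias/variance bounds, which yields $O(h_{22}^4+h_{33}^{-4}+n^{-1}\|\bm{H}_{22}\|^{-1})$.

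\textbf{Main obstacle.} The delicate point is this last step: turning the linearization into a genuine MSE bound requires controlling the event where $\widehat{f}_{23}$ is near zero. I would follow the paper's convention of an asymptotic MSE and truncate on $\{|B-b|<b/2\}$. The complement has probability of order $O(h_{22}^4+h_{33}^{-4}+n^{-1}\|\bm{H}_{22}\|^{-1})$ by Chebyshev, so it does not affect the stated rate. A secondary check is that the cross block $-\bm{\Upsilon}^{-1}\bm{H}_{32}\bm{H}_{22}^{-1}$ is $O(h_{33}^{-1})$ when $h_{32}=O(h_{22})$, so that the $\bm{u}_2$-dependence enters $k_{23}$'s last argument only at order $h_{33}^{-1}$. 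This is the same bookkeeping as in Lemma 2.
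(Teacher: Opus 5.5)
Your proposal follows the paper's proof essentially step for step: the numerator's mean and variance come from Lemma 2's change of variables $\bm{\Xi}^{-1}(\bm{x}_2-\bm{w}_2)=\bm{u}_2$, the denominator from Lemma 2 itself, and the ratio from linearization plus the H\"{o}lder step $(u+v)^2\le 2u^2+2v^2$, and your truncation remark is more careful than the paper. One small point, shared with the paper: condition (v) only removes the $O(h_{22})$ term, and the claimed $O(h_{33}^{-2})$ remainder also needs the first-order kernel term in the large direction, $\nabla k_3(\bm{0})'\bm{\Upsilon}^{-1}(\bm{x}_3-\bm{w}_3)$ after integrating out $\bm{u}_2$, to vanish, i.e.\ the gradient hypothesis of Lemma 2, which you implicitly import anyway by applying Lemma 2 verbatim to $\widehat{f}_{23}$.
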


\begin{proof}
By changing variable $\bm{\Xi}^{-1} (\bm{x}_2 - \bm{w}_2) = \bm{z}_2$ similarly as the proof of Lemma 2, 
\begin{align*}
\|\bm{\Omega}\|^{-1} \mathbb{E}[\bm{X}_{1,i} k_{23}(\bm{\Omega}^{-1}(\bm{z} -\bm{Z}_i))] =& \|\bm{\Omega}\|^{-1} \|\bm{\Xi}\| \int \bm{w}_{1} [k_{23}(\bm{z}_2, \bm{0}) + \{(-\bm{w}_3' (\bm{\Upsilon}^{-1})' \bm{H}_{23}' (\bm{H}_{22}^{-1})', \\
& ~~~ -\bm{z}_2' \bm{\Xi}' (\bm{H}_{22}^{-1})' \bm{H}_{32}' (\bm{\Upsilon}^{-1})' + \bm{w}_3'(\bm{\Upsilon}^{-1})')' \nabla(k_{23}) (\bm{z}_2, \bm{0})\} + O(h_{33}^{-2})\}] \\
& \times \{f(\bm{w}_1, \bm{x}_2, \bm{w}_3) - \bm{z}_2' \bm{\Xi}' \nabla_2(f)(\bm{w}_1, \bm{x}_2, \bm{w}_3) + O(h_{22}^2) \} {\rm d}\bm{w}_1 {\rm d}\bm{z}_2 {\rm d}\bm{w}_3 \\
=& \|\bm{H}_{33}\|^{-1} \{k_3(\bm{0}) \int \bm{w}_{1} f(\bm{w}_1, \bm{x}_2, \bm{w}_3) {\rm d}\bm{w}_1 {\rm d}\bm{w}_3 + O(h_{22}^2 + h_{33}^{-2})\}.
\end{align*}
Similarly, we have
\begin{align*}
\|\bm{\Omega}\|^{-2} \mathbb{E}[\bm{X}_{1,i} \bm{X}_{1,i}' k_{23}^2(\bm{\Omega}^{-1}(\bm{z}-\bm{Z}_i))] \sim |\bm{\Omega}\|^{-2} \|\bm{\Xi}\| \int \bm{w}_1 \bm{w}_1' k_{23}^2(\bm{z}_2, \bm{0}) f(\bm{w}_1, \bm{x}_2, \bm{w}_3) {\rm d}\bm{w}_1 {\rm d}\bm{z}_2 {\rm d}\bm{w}_3
\end{align*}
and the asymptotic variance of $(n \|\bm{\Omega}\|)^{-1} \sum_{i=1}^n \bm{X}_{1,i} k_{23}(\bm{\Omega}^{-1}(\bm{z} -\bm{Z}_i))$ is
\begin{align*}
& \frac{1}{n} |\bm{\Omega}\|^{-2} \|\bm{\Xi}\| \biggm[\int \bm{w}_1 \bm{w}_1' k_{23}^2(\bm{z}_2, \bm{0}) f(\bm{w}_1, \bm{x}_2, \bm{w}_3) {\rm d}\bm{w}_1 {\rm d}\bm{z}_2 {\rm d}\bm{w}_3 \\
& ~~~ - \|\bm{\Xi}\| k_3^2(\bm{0}) \{\int \bm{w}_1 f(\bm{w}_1, \bm{x}_2, \bm{w}_3) {\rm d}\bm{w}_1 {\rm d}\bm{w}_3\} \{\int \bm{w}_1 f(\bm{w}_1, \bm{x}_2, \bm{w}_3) {\rm d}\bm{w}_1 {\rm d}\bm{w}_3\}' \biggm]. 
\end{align*}
Therefore, the order of the AMSE is obtained as seen from applying Lemma 2 on $\widehat{f}_{23}(\bm{z})$ and the H\"{o}lder's inequality.
\end{proof}

Theorem 3 shows the kernel regression estimator converges to the partially conditional expectation $m(\bm{x}_2)$, where the rate of the convergence can depend on not $d_2 + d_3$ but $d_2$. 

\begin{cor}
Suppose the conditions of Theorem 3. If $h_{33}^{-1}=O(h_{22})$ or $o(h_{22})$, minimizing the AMSE yields $h_{22}=O(n^{-(d_2 +4)^{-1}})$ and the MSE with the optimal bandwidth is of order $n^{-4(d_2 +4)^{-1}}$. 
\end{cor}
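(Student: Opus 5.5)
Corollary 3 follows from the AMSE bound of Theorem 3 by a bias--variance balancing argument, exactly parallel to Corollary 2. I would start from
\begin{align*}
\mathbb{E}[\{\widehat{m}(\bm{z}) - m(\bm{x}_2)\}^2] = O(h_{22}^4 + h_{33}^{-4} + n^{-1} \|\bm{H}_{22}\|^{-1}).
\end{align*}
I would first record that, since every entry of $\bm{H}_{22}$ is of order $h_{22}$ and $\bm{H}_{22}$ is regular, we have $\|\bm{H}_{22}\| \asymp h_{22}^{d_2}$. Hence $n^{-1}\|\bm{H}_{22}\|^{-1} \asymp n^{-1} h_{22}^{-d_2}$. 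This order statement is the one place needing care: it holds when $\bm{H}_{22} = h_{22}\bm{C}_{22}$ with $\bm{C}_{22}$ a fixed regular matrix, as in the convention of \cite{wand1995kernel} recalled in Section 2. In general $|\bm{H}_{22}|$ is only $O(h_{22}^{d_2})$, so I would make the same-order regular-scaling convention explicit.

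Next, the hypothesis $h_{33}^{-1} = O(h_{22})$ gives $h_{33}^{-4} = O(h_{22}^4)$. The term coming from the diverging bandwidth is therefore absorbed into the squared bias, and the bound reduces to
\begin{align*}
O(h_{22}^4 + n^{-1} h_{22}^{-d_2}).
\end{align*}
If $o(h_{22})$ holds instead, the same absorption applies a fortiori. So the large block $\bm{H}_{33}$ contributes nothing to the rate, provided it diverges at least as fast as $h_{22}^{-1}$. The conditions of Theorem 3 require nothing further of it.

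Finally, I would minimize $g(h) = A h^4 + B n^{-1} h^{-d_2}$ over $h>0$. The first-order condition $4A h^3 = d_2 B n^{-1} h^{-d_2-1}$ gives $h^{d_2+4} \asymp n^{-1}$, that is, $h_{22} \asymp n^{-1/(d_2+4)}$. Substituting back, both terms are of order $n^{-4/(d_2+4)}$. I would also note that any other choice of $h_{22}$ makes one of the two terms of strictly larger order, so this rate is the optimum of the bound. Choosing $h_{33}$ with $h_{33}^{-1} = O(n^{-1/(d_2+4)})$, for instance $h_{33} \asymp n^{1/(d_2+4)}$ or larger, keeps the assumption consistent, and the stated MSE order $n^{-4(d_2+4)^{-1}}$ follows.
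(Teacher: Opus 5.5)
Your argument is correct and matches the paper's intent: the paper states this corollary without a separate proof, as the immediate bias--variance balancing of the Theorem 3 bound, in parallel with Corollary 2. That balancing absorbs $h_{33}^{-4}$ into $h_{22}^4$ and equates $h_{22}^4$ with $n^{-1}h_{22}^{-d_2}$. Your remark that $\|\bm{H}_{22}\|\asymp h_{22}^{d_2}$ needs the regular-scaling convention $\bm{H}_{22}=h_{22}\bm{C}_{22}$ is something the paper leaves implicit (it silently writes $n^{-1}h_{22}^{-d_2}$ for this term in Section 4), so stating it is a useful clarification rather than a departure.
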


\subsection{Case of conditional independence in conditional density estimation}

$\widehat{f}$ is the kernel density function of $\bm{X}_i$ with the kernel function $k: \mathbb{R}^d \to \mathbb{R}$ and the bandwidth matrix is supposed to be
\begin{align*}
\bm{H} = \left(\begin{matrix}
 \bm{H}_{11} & \bm{H}_{12} & \bm{H}_{13} \\
 \bm{H}_{21} & \bm{H}_{22} & \bm{H}_{23} \\
 \bm{H}_{31} & \bm{H}_{32}  & \bm{H}_{33} \\
 \end{matrix}
 \right) =: 
 \left(\begin{matrix}
 \bm{\Psi} & \bm{H}_{12,3} \\
 \bm{H}_{3,12} & \bm{H}_{33} \\
 \end{matrix}
 \right),
\end{align*}
where all the elements of every blocks are supposed to be same order.

Let us consider the following case that $\bm{X}_{1,i}$ and $\bm{X}_{3,i} $ are conditionally independent given $\bm{X}_{2,i} = \bm{x}_2$ 
$$\mathbb{P}[\bm{X}_{1,i} = \bm{x}_1, \bm{Z}_i =\bm{z}] = \mathbb{P}[\bm{X}_{1,i} = \bm{x}_1 | \bm{X}_{2,i} = \bm{x}_2] \mathbb{P}[\bm{Z}_{i} = \bm{z}].$$
i.e. $\mathbb{P}[\bm{X}_{1,i} = \bm{x}_1 | \bm{Z}_i =\bm{z}] = \mathbb{P}[\bm{X}_{1,i} = \bm{x}_1 | \bm{X}_{2,i} = \bm{x}_2]$.

Suppose the condition of Lemma 2 regarding both $\widehat{f}(\bm{x})$ and $\widehat{f}_{23}(\bm{z})$ hold, where the bandwidth matrix of $\widehat{f}_{23}(\bm{z})$ is
\begin{align*}
\bm{\Omega} := \left(\begin{matrix}
 \bm{H}_{22} & \bm{H}_{23} \\
 \bm{H}_{32} & \bm{H}_{33} \\
 \end{matrix}
 \right).
\end{align*}
Then, we have the following consequence on the kernel conditional density estimator $(\widehat{f}_{23}(\bm{z}))^{-1} \widehat{f}(\bm{x})$. 

\begin{theorem}
Suppose the conditions of Lemma 2 regarding both $\widehat{f}(\bm{x})$ and $\widehat{f}_{23}(\bm{z})$ i.e. {\rm (i)} $\bm{\Psi}$ is regular, $(\bm{\Psi})_{ij}=O(h_{11})$, $h_{11} \to 0$ and $h_{33} \to \infty$ as $n \to \infty$, {\rm (ii)} 
$$\bm{H}_{12,3}=\bm{O} ~~~ and ~~~ \bm{H}_{3,12}= \bm{O}$$
or 
$$h_{12,3} = O(h_{22}) ~~~ and ~~~ h_{3,12} = O(h_{22}),$$
{\rm (iii)} $k_3(\bm{0}) \neq 0$ exists, where
$$k_{3}(\bm{z}_3) = \int k(\bm{w}_1, \bm{w}_2, \bm{z}) {\rm d}\bm{w}_1 {\rm d}\bm{w}_2,$$
{\rm (iv)} $k$ is two times continuously differentiable on $\{(\bm{z}_1, \bm{z}_2, \bm{0}) | (\bm{z}_1, \bm{z}_2) \in \mathbb{R}^{d_1 + d_2}\}$, {\rm (v)} $\nabla_{3}(k)(\bm{z}_1, \bm{z}_2, \bm{0}) = \bm{0}$ for any $(\bm{z}_1, \bm{z}_2)$, {\rm (vi)} $\int (\bm{z}_1', \bm{z}_2')' k(\bm{z}_1, \bm{z}_2, \bm{0}) {\rm d}\bm{z}_1 {\rm d}\bm{z}_2 = \bm{0}$, {\rm (vii)} $f$ is partially continuously differentiable with respect to the variable $(\bm{x}_1, \bm{x}_2)$ for any $\bm{x}_3$. Then, 
$$\mathbb{E}[ \{ \widehat{f}_{23}(\bm{z})^{-1} \widehat{f}(\bm{x}) - {f}_2(\bm{x}_2)^{-1} {f}_{12}(\bm{x}_1, \bm{x}_2) \}^2] = O(h_{11}^4 + h_{33}^{-4} + n^{-1} \|\bm{\Psi}\|^{-1}),
$$
where $f_{12}$ and $f_2$ are the probability density function of $(\bm{X}_{1,i},\bm{X}_{2,i})$ and $\bm{X}_{2,i}$ respectively.
\end{theorem}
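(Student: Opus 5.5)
The plan is to treat numerator and denominator separately with Lemma 2, then combine them by linearizing the ratio, exactly as in the proof of Theorem 2. The only new ingredient is that the "irrelevant" block $\bm{X}_{3,i}$ is now only conditionally irrelevant. The limits must therefore be expressed through $f_{12}$ and $f_2$ rather than through a single marginal.

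First, apply the argument of Lemma 2 to $\widehat{f}(\bm{x})$ with the partition $(\bm{x}_1',\bm{x}_2')'$ versus $\bm{x}_3$, so that $\bm{\Psi}$ plays the role of $\bm{H}_{11}$ and $\bm{H}_{33}$ that of $\bm{H}_{22}$. Write $\bm{H}^{-1}$ in block form with the Schur complement $\bm{\Upsilon}=\bm{H}_{33}-\bm{H}_{3,12}\bm{\Psi}^{-1}\bm{H}_{12,3}$, and use $\|\bm{H}\|\|\bm{\Xi}^{-1}\|=\|\bm{H}_{33}\|$. Change variables $\bm{\Xi}^{-1}((\bm{x}_1,\bm{x}_2)-(\bm{w}_1,\bm{w}_2))=\bm{u}$ and keep $\bm{w}_3$ free. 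Then Taylor expand $k$ around $(\bm{u},\bm{0})$ using (iv)–(v), and $f$ in $(\bm{x}_1,\bm{x}_2)$ using (vi)–(vii). This gives
\begin{align*}
\|\bm{H}_{33}\|\,\mathbb{E}[\widehat{f}(\bm{x})] = k_3(\bm{0}) \int f(\bm{x}_1,\bm{x}_2,\bm{w}_3)\,{\rm d}\bm{w}_3 + O(h_{11}^2+h_{33}^{-2}) = k_3(\bm{0}) f_{12}(\bm{x}_1,\bm{x}_2) + O(h_{11}^2+h_{33}^{-2}),
\end{align*}
with variance of order $n^{-1}\|\bm{\Psi}\|^{-1}$ after the same normalization. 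Doing the same for $\widehat{f}_{23}(\bm{z})$ with the partition $\bm{x}_2$ versus $\bm{x}_3$ and bandwidth $\bm{\Omega}$ gives $\|\bm{H}_{33}\|\widehat{f}_{23}(\bm{z})=k_3(\bm{0})f_2(\bm{x}_2)+O(h_{22}^2+h_{33}^{-2})+O_P(n^{-1/2}\|\bm{H}_{22}\|^{-1/2})$. Here I need the kernel $k_{23}$ of the denominator to satisfy $\int k_{23}(\bm{w}_2,\bm{0})\,{\rm d}\bm{w}_2 = k_3(\bm{0})$, with the same $k_3(\bm{0})$ as the numerator, and the $\bm{H}_{33}$ block must be shared. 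I take this as part of ``the conditions of Lemma 2 regarding $\widehat{f}_{23}$'', as was remarked after Lemma 2.

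Second, linearize the ratio. Write $A=\|\bm{H}_{33}\|\widehat{f}(\bm{x})$, $B=\|\bm{H}_{33}\|\widehat{f}_{23}(\bm{z})$, $a=k_3(\bm{0})f_{12}$ and $b=k_3(\bm{0})f_2$. Then $A/B-a/b = b^{-1}(A-a) - a b^{-2}(B-b) + $ higher order. Squaring, taking expectations, and applying the H\"{o}lder (Cauchy–Schwarz) inequality to the cross term bounds the AMSE by a constant times $\mathbb{E}[(A-a)^2]+\mathbb{E}[(B-b)^2]$. This is $O(h_{11}^4+h_{33}^{-4}+n^{-1}\|\bm{\Psi}\|^{-1}) + O(h_{22}^4+h_{33}^{-4}+n^{-1}\|\bm{H}_{22}\|^{-1})$. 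Since all blocks are of order $h_{11}$ (so $h_{22}=O(h_{11})$), and $\|\bm{H}_{22}\|^{-1}=O(\|\bm{\Psi}\|^{-1})$ because $\|\bm{\Psi}\|\asymp h_{11}^{d_1+d_2}$ while $\|\bm{H}_{22}\|\asymp h_{11}^{d_2}$ with $h_{11}\to 0$, the denominator's contribution is absorbed. This yields the stated rate. The limit is $a/b=f_{12}/f_2$. Conditional independence is used only to interpret this limit as $f(\bm{x}_1\mid\bm{z})$; the rate itself does not need it.

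The main obstacle is the bias computation in the first step with the off-diagonal blocks $\bm{H}_{12,3}$ and $\bm{H}_{3,12}$ nonzero. One must check that the cross terms in $\bm{H}^{-1}$ are $O(h_{33}^{-1})$, and that the first-order terms in $\bm{w}_3$ vanish through (v) rather than through a moment condition. The latter matters because $\bm{w}_3$ ranges over the whole space with weight $f$. I would first try to mirror the order bookkeeping of Lemma 2 verbatim with $\bm{\Psi}$ replacing $\bm{H}_{11}$. Another delicate point is justifying the ratio linearization in mean square, since the denominator may come close to $0$. Here I would argue asymptotically, as the paper does, via consistency of $B$ to $b\neq 0$, which uses $k_3(\bm{0})\neq0$ and $f_2(\bm{x}_2)>0$.
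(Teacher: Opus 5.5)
Your proposal is correct at the paper's level of rigor and follows essentially the same route as the paper: apply Lemma 2 to $\widehat{f}(\bm{x})$ (partition $(\bm{x}_1,\bm{x}_2)$ versus $\bm{x}_3$) and to $\widehat{f}_{23}(\bm{z})$, linearize the ratio around $k_3(\bm{0})f_{12}(\bm{x}_1,\bm{x}_2)/\{k_3(\bm{0})f_2(\bm{x}_2)\}$, and control the cross term with H\"{o}lder's inequality. Your version is more explicit than the paper's two-line proof on two points the paper leaves implicit: the denominator kernel must satisfy $\int k_{23}(\bm{w}_2,\bm{0})\,{\rm d}\bm{w}_2=k_3(\bm{0})$ with a shared $\bm{H}_{33}$, and the denominator's $O(h_{22}^4+n^{-1}\|\bm{H}_{22}\|^{-1})$ contribution is absorbed into the stated rate.
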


\begin{proof}
Since the AMSE is asymptotically given by
$$\{k_3(\bm{0})\}^{-2} \mathbb{E}[ \{
\|\bm{H}_{33}\| (\widehat{f}(\bm{x})- \|\bm{H}_{33}\|^{-1} k_3(\bm{0}) f_{12}(\bm{x}_1, \bm{x}_2)) - \|\bm{H}_{33}\|^2 f_{12}(\bm{x}_1, \bm{x}_2) (\widehat{f}_3(\bm{x}_3) -k_3(\bm{0}))
\}^2],$$
applying Lemma 2 and the H\"{o}lder's inequality we have the result. 
\end{proof}

Theorem 4 shows the kernel conditional density estimator converges to the partially conditional density ${f}_2(\bm{x}_2)^{-1} {f}_{12}(\bm{x}_1, \bm{x}_2)$, where the rate of the convergence can depend on not $d= d_1 + d_2 + d_3$ but $d_1 + d_2$. 

\begin{cor}
Suppose the conditions of Theorem 4. If $h_{33}^{-1} = O(h_{11})$ or $o(h_{11})$, minimizing the AMSE yields $h_{11} = O(n^{-(d_1+d_2 +4)^{-1}})$ and the MSE with the optimal bandwidth is of order $n^{-4(d_1 +d_2 + 4)^{-1}}$. 
\end{cor}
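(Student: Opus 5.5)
The statement to prove is Corollary 4. The plan is to read it directly off the bound in Theorem 4,
\begin{align*}
\mathrm{MSE}(h_{11},h_{33}) = O(h_{11}^4 + h_{33}^{-4} + n^{-1}\|\bm{\Psi}\|^{-1}),
\end{align*}
and then balance the terms. The only structural input needed is the size of $\|\bm{\Psi}\|$. By condition (i) of Theorem 4, $\bm{\Psi}$ is a regular $(d_1+d_2)\times(d_1+d_2)$ matrix with all entries $O(h_{11})$. I will write $\bm{\Psi} = h_{11}\bm{C}$ with $\bm{C}$ regular and of order one. This is the same convention as in the Remark following Corollary 2, and it gives $\|\bm{\Psi}\| \asymp h_{11}^{d_1+d_2}$. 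The variance term is then of exact order $n^{-1}h_{11}^{-(d_1+d_2)}$.

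Next I use the hypothesis $h_{33}^{-1}=O(h_{11})$ (or $o(h_{11})$). It gives $h_{33}^{-4}=O(h_{11}^4)$, so the term coming from the large block is absorbed into the squared bias of the small block. The AMSE therefore reduces to the familiar two-term expression
\begin{align*}
A(h_{11}) = c_1 h_{11}^4 + c_2 n^{-1} h_{11}^{-(d_1+d_2)}.
\end{align*}
Minimizing this in $h_{11}$, by differentiating or simply equating the two orders, gives $h_{11}^{d_1+d_2+4}\asymp n^{-1}$. Hence $h_{11}=O(n^{-(d_1+d_2+4)^{-1}})$. Substituting back, both terms are of order $n^{-4(d_1+d_2+4)^{-1}}$, and this is the rate of the MSE at the optimal bandwidth. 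Since $h_{33}$ is only required to satisfy $h_{33}^{-1}=O(h_{11})$, it can be chosen freely, for instance $h_{33}\asymp n^{(d_1+d_2+4)^{-1}}$ or larger. So the rate does not depend on $d_3$.

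The main obstacle is justifying that the constant $c_2$ is strictly positive and that $c_1$ is generically nonzero. Without this, "minimizing the AMSE" yields only an upper bound and not the stated order of the optimizer. For the variance term I would take the leading constant from the variance display in the proof of Lemma 2, which here reads $f_{12}(\bm{x}_1,\bm{x}_2)\int k^2(\bm{z}_1,\bm{z}_2,\bm{0})\,{\rm d}\bm{z}_1{\rm d}\bm{z}_2$ up to $\|\bm{C}\|^{-1}$, with the subtracted term of smaller order $\|\bm{\Xi}\|\to 0$. This constant is positive whenever $f_{12}(\bm{x}_1,\bm{x}_2)>0$. For the bias, I would note that under (vi) the first-order term vanishes and the second-order Hessian term is generically nonzero. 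If it happens to vanish, the attained rate can only improve, and the stated order remains a valid upper bound.
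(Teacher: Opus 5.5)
Your proposal is correct and follows the paper's approach: the paper gives no separate argument and reads Corollary 4 off the bound in Theorem 4, absorbing $h_{33}^{-4}$ into $h_{11}^4$ and balancing $h_{11}^4$ against $n^{-1}\|\bm{\Psi}\|^{-1}\asymp n^{-1}h_{11}^{-(d_1+d_2)}$. Your convention $\bm{\Psi}=h_{11}\bm{C}$ with $\bm{C}$ regular, and your remarks on the leading constants, make explicit what the paper leaves implicit (entries of order $h_{11}$ alone would not fix the order of $\|\bm{\Psi}\|$).
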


\section{Case of multi-index model in regression or conditional density estimation}

Let us consider the case of multi-index model i.e. if there exist $d_2 \times (d_2 + d_3)$ matrix $\bm{A}$ such that ${\rm rank} \bm{A} = d_2$ and
$$\mathbb{P}[\bm{X}_{1,i} = \bm{x}_1, \bm{Z}_i = \bm{z}_i] = \mathbb{P}[\bm{X}_{1,i} = \bm{x}_1 | \bm{A}\bm{Z}_i = \bm{A}\bm{z}_i] \mathbb{P}[\bm{Z}_i = \bm{z}_i],$$
where $\bm{A} = (\bm{a}_1, \cdots, \bm{a}_{d_2})'$ but $\bm{A}$ is unknown. Then, $\bm{a}_1, \cdots, \bm{a}_{d_2} \in \mathbb{R}^{d_2+d_3}$ are linearly independent. We can choose any independent vectors instead belonging to the span of $\bm{a}_1, \cdots, \bm{a}_{d_2}$.

For any linearly independent $\bm{b}_1, \cdots, \bm{b}_{d_3} \in \mathbb{R}^{d_2+d_3}$ being also independent of $\bm{a}_1, \cdots, \bm{a}_{d_2}$, where $\bm{B} := (\bm{b}_1, \cdots, \bm{b}_{d_3})'$, it always holds that 
$$\mathbb{P}[\bm{X}_{1,i} = \bm{x}_1 | \bm{Z}_i = \bm{z}_i] \equiv \mathbb{P}[\bm{X}_{1,i} = \bm{x}_1 | \bm{AZ}_i = \bm{Az}_i, \bm{BZ}_i = \bm{Bz}_i].$$
Putting $\bm{AZ}_i=: \bm{Y}_{2,i}$, $\bm{BZ}_i =: \bm{Y}_{3,i}$ the case arrives at that of conditional independence 
$$\mathbb{P}[\bm{X}_{1,i} = \bm{x}_1 | \bm{Y}_{2,i} = \bm{y}_{2}, \bm{Y}_{3,i} = \bm{y}_{3}] = \mathbb{P}[\bm{X}_{1,i} = \bm{x}_1 | \bm{Y}_{2,i} = \bm{y}_{2}]$$
(see Section 2.3). 

Let the $d$-dimensional r.v. be $\bm{Y}_i := (\bm{Y}_{1,i}', \bm{Y}_{2,i}', \bm{Y}_{3,i}')'$ and $\bm{D} := (\bm{A}', \bm{B}')'$, where $\bm{Y}_{1,i} = \bm{X}_{1,i}$. The probability density function $g_{23}$ of the $(d_2+d_3)$-dimensional random variable $(\bm{Y}_{2,i}', \bm{Y}_{3,i}')' =: \bm{W}_i$ satisfies
$$g_{23}(\bm{w}) := \|\bm{D}\|^{-1} f_{23}(\bm{D}^{-1}\bm{w}) = \|\bm{D}\|^{-1} f_{23}(\bm{z}),$$
where $f_{23}$ is the probability density function of $(\bm{X}_{2,i},\bm{X}_{3,i})$ and $\bm{w} := \bm{D}\bm{z}$.

\subsection{Case of multi-index model in regression}

Let the kernel density estimator of $\bm{W}$ with the bandwidth matrix $\bm{\Omega}$ at the point $\bm{w} := \bm{D}\bm{z}$ be
\begin{align*}
\widehat{g}_{23}(\bm{w}) := (n \|\bm{\Omega}\|)^{-1} \sum_{i=1}^n k_{23}(\bm{\Omega}^{-1}(\bm{w}-\bm{W}_i)).
\end{align*}
Then, the followings hold that
\begin{align*}
\widehat{g}_{23}(\bm{w}) = (n \|\bm{D}^{-1}\bm{\Omega}\|)^{-1} \sum_{i=1}^n \|\bm{D}\|^{-1} k_{23}(\bm{\Omega}^{-1} \bm{D}(\bm{z}-\bm{Z}_i)) 
\end{align*}
can be seen as the estimator $\widehat{f}_{23}(\bm{z})$ with the bandwidth matrix $\bm{D}^{-1} \bm{\Omega}$ instead of $\bm{\Omega}$ and the kernel function $\|\bm{D}\|^{-1} k_{23}$, where $\bm{D}$ is regular. That means $\widehat{f}_{23}(\bm{z})$ actually takes the form $\widehat{g}_{23}(\bm{w})$, which is the density estimator of conditional dependence i.e. under the assumption of Lemma 2 it holds that 
\begin{align*}
\|\bm{D}^{-1} \bm{\Omega}\| \widehat{f}_{23}(\bm{z}) - k_{23}(\bm{0}) {g}_{2}(\bm{Az}) = O_P(n^{-1/2} h_{22}^{-d_2/2}) + O(h_{11}^2 + h_{22}^{-2}), 
\end{align*}
where $g_2$ is the probability density function of $\bm{Y}_{2,i}$.

Moreover,
\begin{align*}
\widehat{r}(\bm{w}) :=&  \{\widehat{g}_{23}(\bm{w})\}^{-1} (n \|\bm{\Omega}\|)^{-1} \sum_{i=1}^n \bm{Y}_{1,i} k_{23}(\bm{\Omega}^{-1}(\bm{w}-\bm{W}_i)) \\
=& \left\{ (n \|\bm{D}^{-1}\bm{\Omega}\|)^{-1} \sum_{i=1}^n k_{23}(\bm{\Omega}^{-1} \bm{D}(\bm{z}-\bm{Z}_i)) \right\}^{-1} (n \|\bm{D}^{-1} \bm{\Omega}\|)^{-1} \sum_{i=1}^n \bm{X}_{1,i} k_{23}(\bm{\Omega}^{-1} \bm{D}(\bm{z}-\bm{Z}_i))
\end{align*}
can be seen as the estimator $\widehat{m}(\bm{z})$ with the bandwidth matrix $\bm{D}^{-1} \bm{\Omega}$ instead of $\bm{\Omega}$ and the kernel function $k_{23}$. Then, under the assumption of Theorem 3 it holds that 
\begin{align*}
\mathbb{E}[\{\widehat{m}(\bm{z}) - r(\bm{Az})\}^2] = O(h_{22}^4 + h_{33}^{-4} + n^{-1} h_{22}^{-d_2}),
\end{align*}
where
$$r(\bm{Az}) := \int \bm{w}_{1} \frac{g_{12}(\bm{w}_1, \bm{Az})}{g_{2}(\bm{Az})} {\rm d}\bm{w}_1,$$
$g_{12}$ is the probability density function of $(\bm{Y}_{1,i},\bm{Y}_{2,i})$.

It follows that in case of multi-index model the optimal convergence rates of the regression estimator depends on not $(d_2+d_3)$ but $d_2$.

\begin{remark}
There exists $\bm{D}^{-1} \bm{\Omega}$ s.t. $\bm{\Omega}$ satisfies the corresponding assumption (e.g. {\rm (i)}--{\rm (iii)} in Theorem 3), which is seen by calculating backwards. The form of the matrix is
\begin{align*}
\bm{D}^{-1} \bm{\Omega} = \bm{D}^{-1} \left(\begin{matrix}
 h_{22} \bm{C}_{22} & h_{22} \bm{C}_{23} \\
 h_{22} \bm{C}_{32} & h_{33} \bm{C}_{33} \\
 \end{matrix}
 \right),
\end{align*}
where $\bm{C}_{22}$, $\bm{C}_{23}$, $\bm{C}_{32}$ and $\bm{C}_{33}$ are constant matrices with the size of $d_2 \times d_2$, $d_2 \times d_3$, $d_3 \times d_2$, and $d_3 \times d_3$ satisfying $\bm{C}_{22} \neq \bm{O}$ being regular and $\bm{C}_{33} \neq \bm{O}$. The order of the elements of the optimal $\bm{D}^{-1} \bm{\Omega}$ is the form of
$$\left(\begin{matrix}
h_{22}^{-1} & h_{33}^{-1} \\
h_{22}^{-1} & h_{33}^{-1} \\
 \end{matrix}
 \right).$$

The expectation of the kernel density estimator $\widehat{f}_{23}$ with the bandwidth matrix $\bm{D}^{-1}\bm{\Omega}$ can be seen to be $\|\bm{D}\|^{-1}$ times the probability density function of the random variable 
$$\bm{W} + \left(\begin{matrix}
 h_{22} \bm{C}_{22} & h_{22} \bm{C}_{23} \\
 h_{22} \bm{C}_{32} & h_{33} \bm{C}_{33} \\
 \end{matrix}
 \right) \bm{K}.$$
$\bm{K} := (\bm{K}_{2}', \bm{K}_{3}')'$ is the random variable independent of $\bm{W}$, whose probability density function is $k_{23}$. Due to the magnifying and shrinking matrix for large enough $n$ the random variable is dominated by $(\bm{Y}_{2}', h_{33}(\bm{C}_{33} \bm{K}_3)')'$, where the last $d_3$ variable asymptotically follows uniform on $\mathbb{R}^{d_3}$.
\end{remark}

\begin{remark}
Let the kernel $k_{23}$ be spherically symmetric i.e. there exists $\ell_{23}: \mathbb{R}^{d_2 + d_3} \to \mathbb{R}$ s.t. $k_{23}(\bm{w}) = \ell_{23}(\|\bm{w}\|_2)$. Then, for any orthogonal matrix $\bm{R}$ it holds that
$$\ell_{23}(\|\bm{w}\|_2) \equiv \ell_{23}(\|\bm{R} \bm{w}\|_2),$$
which means it is necessary for not $\bm{\Omega}$ but $\bm{\Omega} \bm{R}^{-1}$ to satisfy the corresponding assumption (e.g. {\rm (i)}--{\rm (iii)} in Theorem 3). The form of the matrix is
\begin{align*}
\bm{D}^{-1} \bm{\Omega} \bm{R}^{-1} = \bm{D}^{-1} \left(\begin{matrix}
 h_{22} \bm{C}_{22} & h_{22} \bm{C}_{23} \\
 h_{22} \bm{C}_{32} & h_{33} \bm{C}_{33} \\
 \end{matrix}
 \right) \bm{R}^{-1};
\end{align*}
however, this also cannot theoretically be block diagonal (let alone diagonal) for any orthogonal matrix $\bm{R}$.

The expectation of the kernel density estimator $\widehat{f}_{23}$ with the bandwidth matrix $\bm{D}^{-1} \bm{\Omega} \bm{R}^{-1}$ is $\|\bm{D}\|^{-1}$ times the probability density function of the random variable $\bm{W} + \bm{\Omega} \bm{R}^{-1}\bm{K}$. $\bm{K} := (\bm{K}_{2}', \bm{K}_{3}')'$ is the random variable independent of $\bm{W}$, whose probability density function is $\ell_{23}$. Hence $\bm{R}^{-1}\bm{K} = \bm{K}$ in distribution for spherically symmetric $k_{23}$. For large enough $n$ the random variable is dominated by $(\bm{Y}_{2}', (\bm{H}_{33} \bm{K}_3)')'$, where the last $d_3$ variable asymptotically follows uniform on $\mathbb{R}^{d_3}$.
\end{remark}

\subsection{Case of multi-index model in conditional density estimation}

Let us consider the same setting as Section 4.1. We also use the notation of the bandwidth matrix
\begin{align*}
 \left(\begin{matrix}
 \bm{\Psi} & \bm{H}_{12,3} \\
 \bm{H}_{3,12} & \bm{H}_{33} \\
 \end{matrix}
 \right) := \bm{H}
\end{align*}
 and consider the assumption of Lemma 2, where $(\bm{\Psi})_{ij}=O(h_{11})$, $(\bm{H}_{3,12})_{ij}=O(h_{33})$ and $(\bm{H}_{33})_{ij}=O(h_{33})$. $\bm{H}_{12,3}=\bm{O }$ or $(\bm{H}_{12,3})_{ij}=O(h_{33})$. Let $\widehat{g}$ be the kernel density estimator of $\bm{Y}_i$ at the point $\bm{y} := \widetilde{\bm{D}} \bm{x}$ given by
\begin{align*}
\widehat{g}(\bm{y}) := (n \|\bm{H}\|)^{-1} \sum_{i=1}^n k(\bm{H}^{-1}(\bm{y}-\bm{Y}_i)),
\end{align*}
where
$$ 
\widetilde{\bm{D}} := \left(\begin{matrix}
 \bm{E}_{11} & \bm{O} \\
 \bm{O} & \bm{D} \\
 \end{matrix}
 \right).
$$
Then, 
\begin{align*}
\widehat{g}(\bm{y}) = (n \|\widetilde{\bm{D}}^{-1} \bm{H}\|)^{-1} \sum_{i=1}^n \|\bm{D}\|^{-1} k(\bm{H}^{-1} \widetilde{\bm{D}}(\bm{x}-\bm{X}_i))
\end{align*}
can be seen as the estimator $\widehat{f}(\bm{x})$ with the bandwidth matrix $\widetilde{\bm{D}}^{-1} \bm{H}$ and the kernel function $\|\bm{D}\|^{-1} k$. Under the assumption of Theorem 4 it holds that 
\begin{align*}
\|\widetilde{\bm{D}}^{-1} \bm{H}\| \widehat{f}(\bm{x}) - k(\bm{0}) g(\widetilde{\bm{D}} \bm{x}) = O_P(n^{-1/2} h_{11}^{-(d_1 +d_2)/2}) + O(h_{11}^2 + h_{22}^{-2}),
\end{align*}
where $g$ denotes the probability density function of $\bm{Y}$. Since 
\begin{align*}
\widehat{g}_{23}(\bm{w}) := (n \|\bm{D}^{-1}\bm{\Omega}\|)^{-1} \sum_{i=1}^n \|\bm{D}\|^{-1} k_{23}(\bm{\Omega}^{-1} \bm{D}(\bm{z}-\bm{Z}_i))
\end{align*}
can be seen as the estimator $\widehat{f}_{23}(\bm{z})$ with the bandwidth matrix $\bm{D}^{-1} \bm{\Omega}$ and the kernel function $\|\bm{D}\|^{-1} k$, $\widehat{g}_{23}(\bm{D} \bm{z})^{-1} \widehat{g}(\widetilde{\bm{D}} \bm{x})$ can be seen as the estimator $\widehat{f}_{23}(\bm{z})^{-1} \widehat{f}(\bm{x})$ with the bandwidth matrix $\widetilde{\bm{D}}^{-1} \bm{H}$ and the kernel function $k$. If $\widetilde{\bm{D}}^{-1} \bm{H}$ satisfies the corresponding assumption in Theorem 4, all the previous results hold, where such $\bm{H}$ is easily obtained by calculating backwards. Here also note Remark 3.

Thus, we see 
$$\mathbb{E}[ \{ \widehat{f}_{23}(\bm{z})^{-1} \widehat{f}(\bm{x}) - {g}_{2}(\bm{Az})^{-1} g_{12}(\bm{x}_1, \bm{Az}) \}^2] = O(h_{11}^4 + h_{33}^{-4} + n^{-1} h_{11}^{-d_1-d_2}).
$$

It follows that in case of multi-index model the optimal convergence rates of the conditional density estimator depends on not $(d_2+d_3)$ but $d_2$.

\section{Simulation and Case study}

This section reports numerical properties of the kernel estimators of conditional density and regression. The bandwidth matrices $\bm{H}$ of the regression estimators were determined by several approaches: the leave-one-out least squares cross validation (LSCV) with an optimization algorithm or \texttt{npregbw} function in the \textsf{np} package in R (\citealt{li2004cross}) and MEKRO (\citealt{white2017variable}). The mean integrated squared errors (MISE) 
$$\mathbb{E}_{\bm{T}}[\mathbb{E}[\{\widehat{m}(\bm{X}) -m(\bm{X})\}^2|\bm{T}]$$
was also measured, where $\bm{T}$ denotes the training data following the distribution of $\bm{X}$. In kernel conditional density estimation the the bandwidth matrix was determined by such as the likelihood cross-validation and the least-squares cross-validation (\citealt{hall2004cross}). \texttt{npcdensbw} function in the \texttt{np} package calculated the bandwidth matrix. Every kernel function is the Gaussian one, which is spherically symmetric, hence all the bandwidth matrices are supposed to be symmetric (see Remark 1).

The following two settings are the case of conditional independence, which is given in \cite{white2017variable}. The first  case is with $d_1=1$, $d_2=2$, $d_3=1$ 
$$Y=\sin(2\pi X_1) + \sin(\pi X_2) + 0.5 \epsilon_0,$$
where $(X_1, X_2, X_3)$ is the trivariate uniform random variable on $[0,1]^3$. The mutually independent variable $\epsilon_0$ follows the standard normal distribution. 

The second case is with $d_1=1$, $d_2=2$, $d_3=8$
$$Y=\sin\{2\pi (X_1 + X_2)/(1+X_3)\} + \epsilon_1,$$
where $(X_1, \cdots, X_{10})$ is the uniform random variable on $[0,1]^{10}$ and $\epsilon_1$ follows the normal distribution with $\mathbb{V}[\epsilon]= \mathbb{V}[2\pi (X_1 + X_2)/(1+X_3)]/3$. The model is multi-index, where 
$$\bm{A} = \left(\begin{matrix}
 1 & 1 & 0 \\
 0 & 0 & 1 \\
 \end{matrix}
 \right)$$
 is an example satisfying $\mathbb{E}[\bm{X}_{1,i} | \bm{Z} = \bm{z} ] = m(\bm{A}\bm{z})$.

Tables 1--5 and 6--10 correspond to the first and the second case respectively. Tables 1--4 show the bandwidth values and the MISE of the regression estimator. The brute-force searched bandwidth matrix $\bm{H}$ was supposed to be 
$$\bm{H}_s := \left(\begin{matrix}
 h & 0 & 0 \\
 0 & h & 0 \\
 0 & 0 & h \\
 \end{matrix}
 \right), 
 \bm{H}_d := \left(\begin{matrix}
 h_1 & 0 & 0 \\
 0 & h_2 & 0 \\
 0 & 0 & h_3 \\
 \end{matrix}
 \right),
 \bm{H}_c := \left(\begin{matrix}
 h_1 & h_4 & h_5 \\
 h_4 & h_2 & h_6 \\
 h_5 & h_6 & h_3 \\
 \end{matrix}
 \right),
 $$
where the degree of freedom is $1,3, 6$. MEKRO and \texttt{npregbw} searched the optimal matrix taking the form of $\bm{H}_d$; however, the elements are supposed to be positive, which makes the difference from the brute-force searched bandwidth matrix. Table 5 shows the values of the bandwidth matrix 
$$\left(\begin{matrix}
 h_0 & 0 & 0 & 0\\
 0 & h_1 & 0 & 0\\
 0 & 0 & h_2 & 0\\
 0 & 0 & 0 & h_3\\
 \end{matrix}
 \right)$$
of the conditional density estimator chosen by \texttt{npcdensbw}.

The third case is
$$Y=2(X_1 + X_2) + X_3 + X_4 + \epsilon,$$
which comes from \cite{conn2019oracle}. $(X_1, \cdots, X_9)$ is the uniform random variable on $[0,1]^9$, and $\epsilon$ follows the standard normal distribution. The model is single-index, and the degrees are $d_1=1$, $d_2=1$ and $d_3=8$. Tables 11--16 show the bandwidth values and the mean integrated squared errors (MISE). 

\begin{table}[h]
\caption{MISE Results}
{\fontsize{8pt}{8pt}\selectfont
$$\begin{tabu}[c]{|c|cc|cc|cc|cc|cc|}
   \hline
n & {\rm scalar} & {\rm sd} & {\rm diagonal} & {\rm sd} & {\rm symmetric} & {\rm sd} & {\rm npregbw} & {\rm sd} & {\rm MEKRO} & {\rm sd} \\  \hline 
        \multicolumn{11}{|c|}{Y=\sin(2\pi X_1) + \sin(\pi X_2) + 0.5 \epsilon_0} \\ \hline
100 & 0.180 & 0.130 & 0.093 & 0.047 & 0.165 & 0.084 & 0.087 & 0.031 & 0.553 & 0.060 \\ \hline 
300 & 0.098 & 0.098 & 0.047 & 0.055 & 0.105 & 0.067 & 0.033 & 0.006 & 0.554 & 0.044 \\ \hline 
500 & 0.091 & 0.133 & 0.036 & 0.055 & 0.084 & 0.038 & 0.023 & 0.004 & 0.549 & 0.041 \\ \hline 
1000 & 0.050 & 0.076 & 0.015 & 0.053 & 0.048 & 0.050 & 0.014 & 0.003 & 0.370 & 0.257 \\ \hline 
 
 \multicolumn{11}{|c|}{Y=\sin\{2\pi (X_1 + X_2)/(1+X_3)\} + \epsilon_1} \\ \hline
100 & 0.439 & 0.063 & 0.324 & 0.178 & 0.498 & 0.212 & 0.210 & 0.092 & 0.495 & 0.037 \\ \hline 
300 & 0.372 & 0.047 & 0.139 & 0.129 & 0.152 & 0.166 & 0.079 & 0.010 & 0.501 & 0.022 \\ \hline 
500 & 0.344 & 0.057 & 0.101 & 0.113 & 0.093 & 0.117 & 0.053 & 0.006 & 0.492 & 0.018 \\ \hline 
1000 & 0.303 & 0.058 & 0.044 & 0.061 & 0.059 & 0.060 & 0.035 & 0.003 & 0.491 & 0.011 \\ \hline 
 
 \multicolumn{11}{|c|}{Y=2(X_1 + X_2) + X_3 + X_4 + \epsilon} \\ \hline
100 & 0.381 & 0.117 & 0.422 & 0.236 & 0.698 & 0.328 & 0.477 & 0.246 & 0.812 & 0.098 \\ \hline 
300 & 0.265 & 0.101 & 0.155 & 0.096 & 0.186 & 0.093 & 0.158 & 0.053 & 0.785 & 0.067 \\ \hline 
500 & 0.211 & 0.068 & 0.103 & 0.062 & 0.102 & 0.034 & 0.097 & 0.030 & 0.793 & 0.052 \\ \hline 
1000 & 0.176 & 0.081 & 0.066 & 0.046 & 0.035 & 0.018 & 0.059 & 0.017 & 0.788 & 0.053 \\ \hline 
 
  \end{tabu}$$
}
\end{table}

\section{Conclusion}\label{sec13}
Nonparametric estimators suffer from the curse of dimensionality; however, the kernel estimators are proved to depend only on the number of relevant variables to the dependent variables. 

This study does not propose bandwidth selection algorithms; however, existing methods such as the lease squares cross-validation allowing large bandwidth values are applicable. Suppose that the kernel function and all the elements of the Hessian matrix of the underlying density $f$ are square-integrable, which is usually assumed in the kernel estimation. Then, the convergence rate of the mean integrated squared error (MISE) is same as that of the AMSE. That means the corresponding elements of the bandwidth matrix minimizing MISE tends to infinity in the kernel estimation with irrelevant variables.

The restrictive assumption of the integrability of the moment of the underlying distribution can be relaxed by some data transformation such as log-transformation. The form of the transformed density estimator is
$$(n \|\bm{H}\|)^{-1} \sum_{i=1}^n (\prod_{j=1}^d x_j)^{-1} k(\bm{H}^{-1}(\ln\bm{x}- \ln\bm{X}_i)),$$
where $\ln$ is the elementwise transformation. The moment condition $\mathbb{E}[X_{i,k}^2] < \infty$ changes to $\mathbb{E}[(\ln X_{i,k})^2] < \infty$ without changing the convergence rate. Further investigation and its numerical properties are future work.

\bibliography{ref}

\end{document}